\theoremstyle{definition}
\newtheorem{theorem}{Theorem}[section]
\newtheorem{definition}[theorem]{Definition}
\newtheorem{proposition}[theorem]{Proposition}
\newtheorem{lemma}[theorem]{Lemma}
\newtheorem{corollary}[theorem]{Corollary}
\newtheorem*{remark}{Remark}
\newtheorem*{acknowledgment}{Acknowledgment}
\newcommand{\Natural}{\mathbb{N}}
\newcommand{\Nonnegative}{\mathbb{N}_0}
\newcommand{\Integer}{\mathbb{Z}}
\newcommand{\Rational}{\mathbb{Q}}
\newcommand{\Real}{\mathbb{R}}
\newcommand{\Complex}{\mathbb{C}}
\newcommand{\abs}[1]{\left\lvert #1 \right\rvert}
\newcommand{\norm}[1]{\left\lVert #1 \right\rVert}
\newcommand{\floor}[1]{\left\lfloor #1 \right\rfloor}
\newcommand{\ceil}[1]{\left\lceil #1 \right\rceil}
\newcommand{\set}[1]{\left\{ #1 \right\}}
\newcommand{\tset}[1]{\{ #1 \}}
\newcommand{\setcond}[2]{\left\{ #1 \,;\, #2 \right\}}
\newcommand{\tsetcond}[2]{\{ #1 \,;\, #2 \}}
\newcommand{\T}{\mathcal{T}}
\newcommand{\TLattice}{\mathcal{T}_{\mathbb{N}_0}}
\newcommand{\Pas}[1]{\mathrm{Pas} \left( {#1} \right)}
\newcommand{\Paszeta}[4]{Z_{#1} \left( {#2},{#3};{#4} \right)}
\newcommand{\counting}[3]{\phi_{#1} \left( #2 ; #3 \right)}
\newcommand{\summatory}[3]{N_{#1} \left( #2 ; #3 \right)}
\newcommand{\coefficient}[3]{\psi_{#1} \left( #2 ; #3 \right)}
\newcommand{\ct}[2]{\phi_{#1} \left( #2 \right)}
\newcommand{\sm}[2]{N_{#1} \left( #2 \right)}
\newcommand{\ce}[2]{\psi_{#1} \left( #2 \right)}
\newcommand{\smX}[2]{N_{#1}^{*} \left( #2 \right)}
\numberwithin{equation}{section}
\title{\textbf{Non-real poles on the axis of absolute convergence of the zeta functions associated to Pascal's triangle modulo a prime}}
\author{IKKAI, Tomohiro}
\date{}
\begin{document}

\maketitle

\begin{abstract}
Picking binomial coefficients which cannot be divided by a given prime from Pascal's triangle, 
we find that they form a set with self-similarity.
Essouabri studied on a class of meromorphic functions associated to the above set.
%defined by Dirichlet series with two-variable polynomials,
%which are derived from above Pascal's triangle modulo a prime.
These functions are related to fractal geometry 
and it is a problem whether such a function has a non-real pole on its axis of absolute convergence.

Essouabri gave a proof of existence of such a non-real pole in the simplest case.
The keys of his proof are Stein's and Wilson's estimates on how fast the points multiply in Pascal's triangle modulo a prime.
This article will give an extension of Essouabri's result to some cases 
with certain ways to count the points in Pascal's triangle modulo a prime which are different from the traditional one.
\end{abstract}

%%%%%%%%%%%%%%%%%%%%%%%%%%%%%%%%%%%%%%%%%%%%%%%%%%%%%%%%%%%%%%%%%%%%%%%%%%%%%%%%%%%%%%%%%%%%%%%%%%%%%%%%%%%%%%%%%%%%%%%%%%%%%%%%%%%%%%%%%%
\section{Introduction} \label{secIntro}

Let $\Natural$ be the set of positive integers, $\Nonnegative = \Natural \cup \tset{0}$, 
$\Integer$ be the ring of rational integers, $\Rational$ be the field of rational numbers,
$\Real$ be the field of real numbers, $\Complex$ be the field of complex numbers, respectively.

The set 
\begin{equation}
\TLattice = \setcond{(m, n) \in \Nonnegative \times \Nonnegative}{m \geq n} \notag
\end{equation}
can be regarded as Pascal's triangle in the sense that each point $(m, n) \in \TLattice$ corresponds to a binomial coefficient 
$\binom{m}{n} = m!/(n! (m-n)!)$.
In this paper, we prefer to make arguments on Pascal's triangle in the form $\TLattice$ 
instead of the familiar form of an equilateral triangle.

We consider the set
\begin{equation*}
\Pas{p} = \setcond{(m, n) \in \TLattice}{\binom{m}{n} \not\equiv 0 \pmod{p}}
\end{equation*}
for a prime number $p$, of which the distribution of the points has been studied for a long time.
%It has been studied the distribution of the points of $\Pas{p}$. 
It is classically known that the set $\Pas{p}$ has ``self-similarity'' as seen in Figure 1 and Figure 2, 
where the points with a filled mark belong to $\Pas{p}$.
Kummer \cite{Kum1852} first gave a criterion when a power of $p$ divides a binomial coefficient 
in terms of the expansion of non-negative integers in the base $p$.
We can determine which point $(m, n) \in \TLattice$ belongs to $\Pas{p}$ by Kummer's criterion, 
or by Lucas' formula on binomial coefficients appearing in Section \ref{secMainB}, which makes the argument simpler.
In fact, such an arithmetic property causes $\Pas{p}$ to have ``self-similarity.'' 

%Figure 1 (resp. Figure 2) shows a part of the $\bmod \, 2$ (resp. $\bmod \, 3$) case, 
%where each point $(m,n) \in \TLattice$ with $\binom{m}{n}$ not divided $2$ (resp. $3$) possesses a filled mark and 
%the others each possess a blank mark.
%We would note a property of these sets, say ``self-similarity.''

\begin{figure}[t]
\begin{minipage}{0.5\hsize}
\centering
\includegraphics[width=6.0cm,clip]{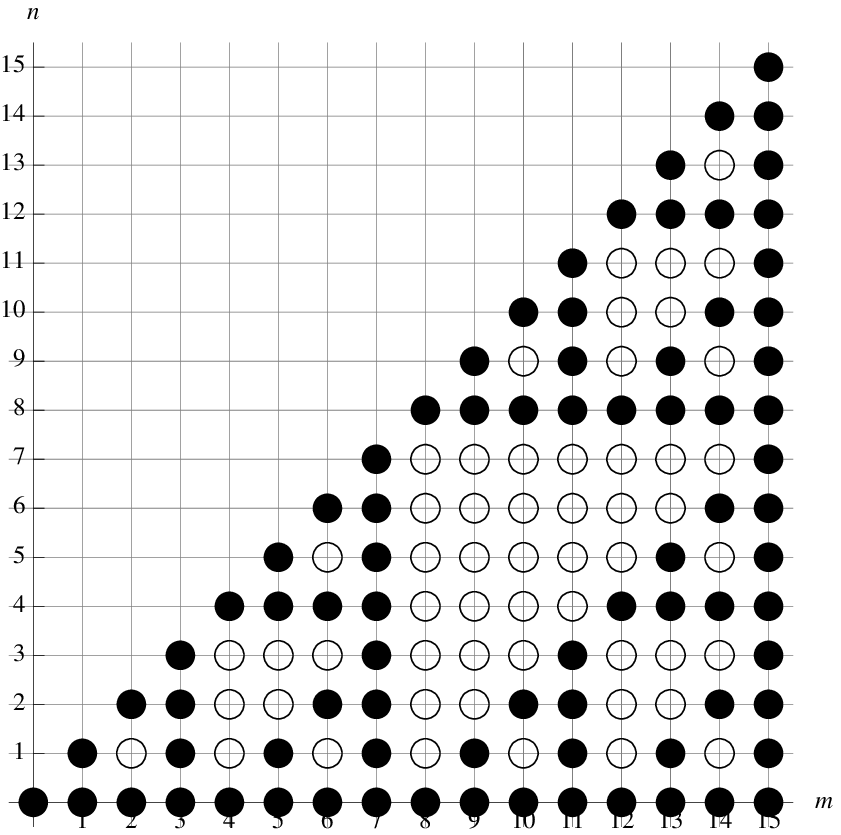}
\caption{$0 \leq m < 16$ in $\Pas{2}$}
\label{figPas2}
\end{minipage}
\begin{minipage}{0.5\hsize}
\centering
\includegraphics[width=6.0cm,clip]{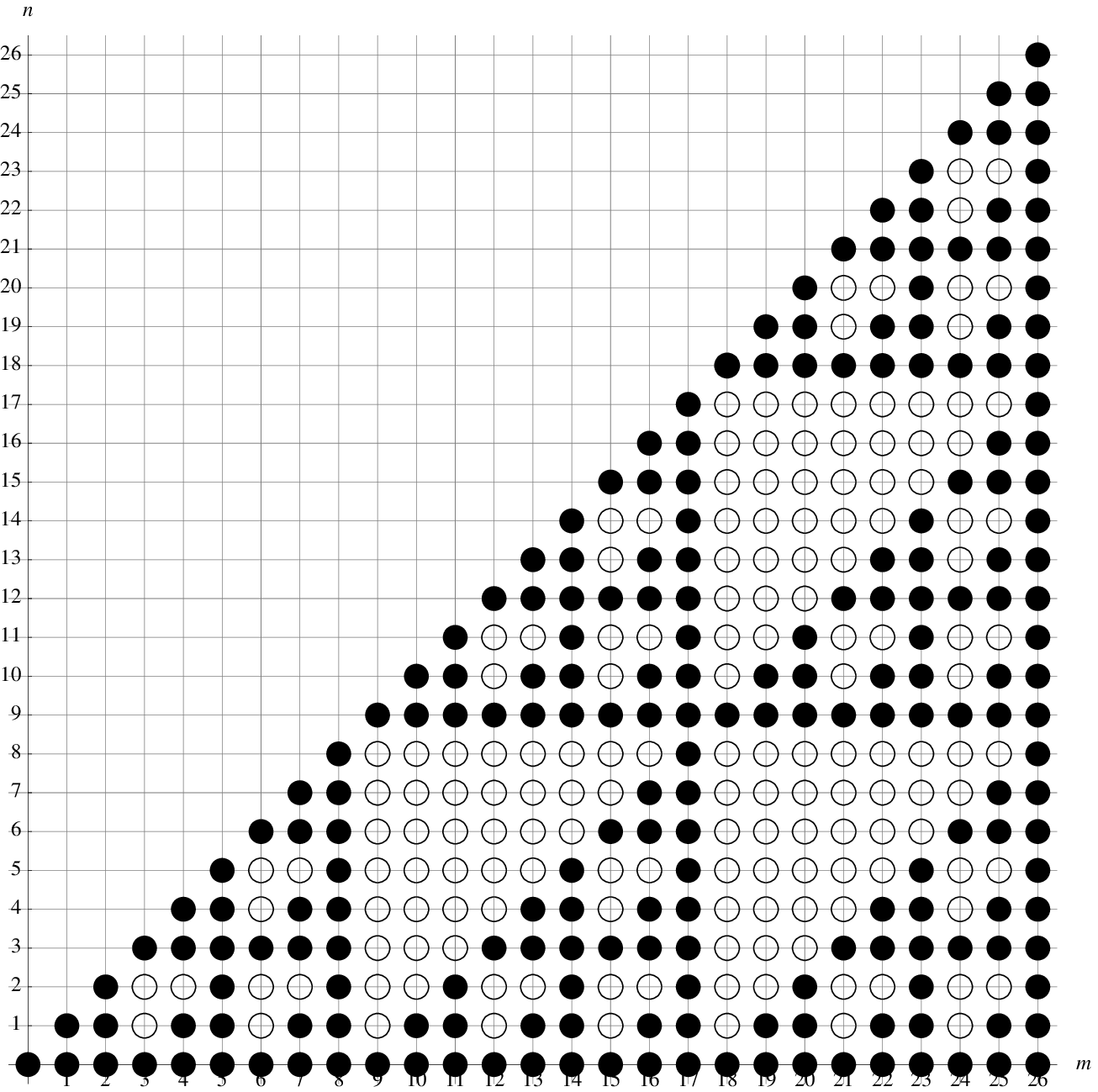}
\caption{$0 \leq m < 27$ in $\Pas{3}$}
\label{figPas3}
\end{minipage}
\end{figure}

Essouabri \cite{Ess05} introduced the zeta function associated to $\Pas{p}$, defined by 
\begin{equation*}
\Paszeta{p}{P}{Q}{s} = \sum_{\begin{subarray}{c} (m,n) \in \Pas{p} \\ P(m,n) \neq 0 \end{subarray}} \frac{Q(m,n)}{P(m,n)^{s/\deg P}},
\end{equation*}
where $s = \sigma + it \in \Complex$ with the sufficiently large real part $\sigma$, the imaginary part $t$ 
and $P$, $Q$ are two-variable polynomials with real coefficients.
Moreover, $P$ is required to be ``$\T$-elliptic'' to ensure the convergence; 
$P \in \Real[X,Y]$ is said to be $\T$-elliptic if $P$ is non-constant and its highest degree part $P^{*}$ 
(which means that $P^{*}$ is homogeneous and $\deg (P-P^{*}) < \deg P$) 
satisfies $P^{*}(x,y) > 0$ for every $(x,y) \in \Real^2 \setminus \tset{(0,0)}$ with $x \geq y \geq 0$. 

Essouabri showed several analytic properties of $\Paszeta{p}{P}{Q}{s}$ 
including the meromorphic continuation, the location of possible poles and the abscissa of absolute convergence.
In particular, in the case $P(X,Y) = X$, he proved an intriguing fact as follows. 

\begin{theorem}[Essouabri, 2005 \cite{Ess05}] \label{thEss}
Let $\theta_p = \log \frac{p(p+1)}{2} / \log p$. 
Then the meromorphic function on the whole complex plane $\Paszeta{p}{X}{1}{s}$ has at least two non-real poles 
on its axis of absolute convergence $\tset{\sigma = \theta_p}$.
(Let ``the axis of absolute convergence'' represent the vertical line in the complex plane through the abscissa of absolute convergence.)
\end{theorem}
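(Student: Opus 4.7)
The plan is to exploit the self-similar structure of $\Pas{p}$ under base-$p$ expansion so as to read off the poles of $\Paszeta{p}{X}{1}{s}$ on the line $\set{\sigma = \theta_p}$ from the Fourier coefficients of a bounded periodic function attached to the summatory function of $\Pas{p}$.

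First I would set $a_m := \#\setcond{n}{(m,n) \in \Pas{p}}$ and $\tilde N(x) := \sum_{m=0}^{\floor{x}} a_m$. By Lucas's theorem, $a_{pq + r} = (r+1)\,a_q$ for $0 \le r < p$, so splitting the sum over $m$ by its last base-$p$ digit yields the recursion
\begin{equation*}
\tilde N(py + j) \;=\; \tfrac{p(p+1)}{2}\,\tilde N(y - 1) \;+\; \tfrac{(j+1)(j+2)}{2}\,a_y, \qquad 0 \le j < p.
\end{equation*}
Iterating this along the base-$p$ digits of $x$, which is essentially the content of the Stein--Wilson estimates referenced in the introduction, gives
\begin{equation*}
\tilde N(x) \;=\; x^{\theta_p}\,G(\log_p x) \;+\; O\bigl(x^{\theta_p - \delta}\bigr)
\end{equation*}
for some $\delta > 0$, with $G \colon \Real / \Integer \to \Real$ bounded and $1$-periodic.

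Next, Abel summation gives
\begin{equation*}
\Paszeta{p}{X}{1}{s} \;=\; s \int_{1}^{\infty} \tilde N(t)\,t^{-s-1}\,dt \;-\; 1 \qquad (\sigma > \theta_p).
\end{equation*}
Substituting the asymptotic, changing variables via $t = p^u$, and using the Fourier expansion $G(v) = \sum_{k \in \Integer} c_k\, e^{2\pi i k v}$, the main term of the integral becomes
\begin{equation*}
\frac{s \log p}{\,1 - p^{\theta_p - s}\,}\,\int_{0}^{1} G(v)\, p^{v(\theta_p - s)}\,dv
\end{equation*}
modulo a function holomorphic on $\set{\sigma > \theta_p - \delta}$. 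The prefactor has simple poles at $s_k := \theta_p + 2\pi i k / \log p$ for all $k \in \Integer$, and at $s = s_k$ the integral collapses to the Fourier coefficient $c_k = \int_0^1 G(v)\,e^{-2\pi i k v}\,dv$. Hence $s_k$ is a genuine simple pole of $\Paszeta{p}{X}{1}{s}$ exactly when $c_k \neq 0$.

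The remaining, and main, task is to prove that $G$ is \emph{not} constant, so that some $c_k$ with $k \neq 0$ survives. A direct way is an explicit comparison of limits: at $x = p^k - 1$ the recursion gives $\tilde N(p^k - 1) = \bigl(\tfrac{p(p+1)}{2}\bigr)^k$ exactly, so $\tilde N(x)/x^{\theta_p} \to 1$ along that sequence, whereas evaluating along a sequence of $x$'s with a different leading base-$p$ digit pattern yields a different limit via the recursion. Consequently $G$ assumes at least two distinct values and is non-constant. Real-valuedness of $G$ then forces $c_{-k} = \overline{c_k}$, so the non-vanishing Fourier coefficients come in complex-conjugate pairs, producing at least two non-real poles on $\set{\sigma = \theta_p}$, as required. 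This non-constancy step is the main obstacle: it is where the specific arithmetic of Pascal's triangle modulo $p$, rather than merely the order of growth of $\tilde N$, genuinely enters the argument.
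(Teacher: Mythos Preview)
Your route is genuinely different from the paper's. The paper argues by contraposition via the Wiener--Ikehara theorem: if $\Paszeta{p}{X}{1}{s}$ had no non-real pole on $\{\sigma=\theta_p\}$, then Wiener--Ikehara would force $\smX{p}{u}/u^{\theta_p}$ to converge, contradicting the Stein--Wilson results that $\alpha_p=1>\beta_p$. Your approach is instead a direct Mellin/Fourier computation: you posit $\tilde N(x)=x^{\theta_p}G(\log_p x)+O(x^{\theta_p-\delta})$ with $G$ periodic, and read off that the poles on the critical line sit at $s_k=\theta_p+2\pi i k/\log p$ with residues proportional to the Fourier coefficients $c_k$ of $G$. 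This buys you strictly more than the paper's argument, since it locates the possible poles explicitly, whereas Wiener--Ikehara gives only existence.

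That said, there is a real gap in your write-up, namely the structured asymptotic $\tilde N(x)=x^{\theta_p}G(\log_p x)+O(x^{\theta_p-\delta})$. You attribute it to ``the Stein--Wilson estimates referenced in the introduction,'' but that is not what those papers prove: Stein and Wilson establish $\alpha_p=1$ and bounds on $\beta_p$, i.e.\ information about the $\limsup$ and $\liminf$ of $\tilde N(x)/x^{\theta_p}$, not the existence of a single periodic function $G$ with a power-saving remainder. Such periodic asymptotics for $p$-regular digital sums are indeed known (Delange, Flajolet--Grabner--Kirschenhofer--Prodinger--Tichy, etc.), but you would have to cite or prove one of those results, not Stein--Wilson, and the argument is not just ``iterating the recursion.'' Without the error term your integral manipulation does not yield meromorphic continuation past $\sigma=\theta_p$ and the residue identification collapses. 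Note also that the non-constancy step you flag as the main obstacle is, in the end, exactly the same input the paper uses ($\alpha_p\neq\beta_p$); you compute one accumulation value ($=1$ along $x=p^k-1$) but only assert the existence of a second, so here you are implicitly quoting Stein--Wilson just as the paper does.
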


Indeed, we only have to verify that $\Paszeta{p}{X}{1}{s}$ has one desired pole 
since the second one can be obtained by using the Schwarz reflection principle.

Recalling Essouabri's proof of Theorem \ref{thEss}, 
we note that a key role is played by some estimates for how fast the points in $\Pas{p}$ multiply. 
More precisely, certain estimates for an arithmetic function $\smX{p}{u} = \# \tsetcond{(m,n) \in \Pas{p}}{m<u}$ 
defined for $u \in \Natural$ are significant in his proof.
Actually,
\begin{equation*}
\Paszeta{p}{X}{1}{s} = \int_{1^{-}}^{\infty} u^{-s} \, d\smX{p}{u}
\end{equation*}
holds for $s \in \Complex$ at which $\Paszeta{p}{X}{1}{s}$ converges absolutely and 
we would find that the Wiener--Ikehara theorem (see \cite[Chapter 5, Corollary 1]{Hla-Sch-Tas} for example) would give 
the existence of the limit value $\lim_{u \to \infty} \smX{p}{u}/u^{\theta_p}$ if $\Paszeta{p}{X}{1}{s}$ had no non-real poles 
on the line in question.
However, this is in fact a contradiction to the known results 
on $\limsup_{u \to \infty} \smX{p}{u}/u^{\theta_p}$ and $\liminf_{u \to \infty} \smX{p}{u}/u^{\theta_p}$, 
due to Harborth \cite{Har77}, Stolarsky \cite{Sto77}, Stein \cite{Ste89} and Wilson \cite{Wil98}. 
(Details will appear in Section \ref{secPrelim}.)

The purpose of this article is to show the results parallel to Theorem \ref{thEss} in case $P(X,Y) = X+Y$ and in case $P(X,Y) = X+2Y$, $p=2$.
Those are specifically stated as follows.

\begin{theorem} \label{thGoal}
$(1)$\, For any prime $p$, 
the meromorphic function $\Paszeta{p}{X+Y}{1}{s}$ has a non-real pole (hence at least two non-real poles) on its axis of absolute convergence $\tset{\sigma = \theta_p}$.

$(2)$\, The meromorphic function $\Paszeta{2}{X+2Y}{1}{s}$ has a non-real pole (hence at least two non-real poles) on its axis of absolute convergence 
$\tset{\sigma = \theta_p}$.
\end{theorem}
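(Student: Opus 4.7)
The plan is to reproduce Essouabri's strategy of Theorem \ref{thEss} in the new setting. For each claim in Theorem \ref{thGoal} I would introduce the corresponding counting function,
\begin{equation*}
N^{(1)}_p(u) = \#\setcond{(m,n) \in \Pas{p}}{m+n < u}, \qquad
N^{(2)}_2(u) = \#\setcond{(m,n) \in \Pas{2}}{m+2n < u},
\end{equation*}
and rewrite the zeta function as a Stieltjes integral $\int_{1^{-}}^{\infty} u^{-s}\, dN(u)$ exactly as in Essouabri's paper. Assume for contradiction that the zeta function has no non-real pole on the line $\tset{\sigma = \theta_p}$; then, given that $\theta_p$ is the abscissa of absolute convergence (which should follow from Essouabri's general results for $\T$-elliptic polynomials of degree $1$), the Wiener--Ikehara theorem forces the limit $\lim_{u\to\infty} N(u)/u^{\theta_p}$ to exist. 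The whole task is thereby reduced to exhibiting the oscillation
\begin{equation*}
\liminf_{u\to\infty} \frac{N(u)}{u^{\theta_p}} \;<\; \limsup_{u\to\infty} \frac{N(u)}{u^{\theta_p}}.
\end{equation*}

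To produce this oscillation I would exploit the digit-based self-similarity coming from Lucas' theorem: $(m,n) \in \Pas{p}$ if and only if each base-$p$ digit of $n$ does not exceed the corresponding digit of $m$. Decomposing $(m,n) \in \Pas{p} \cap [0, p^k)^2$ according to its leading digit splits the set into $p(p+1)/2$ disjoint blocks, each an affine translate of $\Pas{p} \cap [0, p^{k-1})^2$. Translating such a block by $(a p^{k-1}, b p^{k-1})$ with $a \ge b$ shifts the weight $m+n$ by $(a+b) p^{k-1}$ and the weight $m+2n$ by $(a+2b) p^{k-1}$; iterating this and normalising by $u^{\theta_p}$ should yield a self-similar functional equation for $F(u) := N(u)/u^{\theta_p}$ whose solution is, up to a controlled error, a periodic function of $\log_p u$ with period $1$. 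The problem thus reduces to showing that this periodic profile is non-constant.

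The main obstacle lies in this final non-constancy step. Unlike Essouabri's setting, where the asymmetry between Stein's $\liminf$ and Wilson's $\limsup$ for $\smX{p}{u}/u^{\theta_p}$ was already available in the literature, here I have to supply that asymmetry myself. The plan is to compute $N(u)$ in closed form along two explicit sequences of $u$ chosen so that the digit-structure asymmetry becomes visible: for part (1), compare $u = p^k$ with $u = (p+1) p^{k-1}$, since $m+n$ crosses a natural ``edge'' of the self-similar triangle between these scales; for part (2), the asymmetric weight $m + 2n$ produces a similar edge effect, detectable by comparing, say, $u = 2^k$ with $u = 3 \cdot 2^{k-1}$, with the extra subtlety that the coefficient $2$ creates carries in base $p=2$ and so must be tracked digit-by-digit via Kummer's criterion. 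Once two such sequences are shown to yield different numerical limits, the contradiction is complete.
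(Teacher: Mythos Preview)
Your overall architecture is exactly the paper's: rewrite $Z_p(P,1;s)$ as a Stieltjes integral against the counting function, assume no non-real pole on $\{\sigma=\theta_p\}$, apply Wiener--Ikehara, and contradict this by showing that $N(u)/u^{\theta_p}$ oscillates. The reduction step is fine.

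Where the paper differs in execution, and where your plan has a genuine gap, is the ``final non-constancy step'' you correctly flag as the main obstacle. Two points:

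\emph{Method.} Rather than a leading-digit block decomposition, the paper derives a \emph{lowest-digit} recurrence for the level-set counter $\phi_p(q)=\#\{(m,n)\in\Pas{p}:P(m,n)=q\}$ (Lucas' theorem applied to the units digit), sums it to an exact relation for $N_p(p^k u)$ in terms of $N_p(u)$, and obtains a closed formula for $\lim_{k\to\infty}\psi_p(p^k u)$ for every fixed $u$. This gives, for $P=X+Y$, $\lim_k\psi_p(p^k)=1/2$ and $\lim_k\psi_p(2p^k)=3/2^{\theta_p+1}$; for $P=X+2Y$ and $p=2$, $\lim_k\psi_2(2^k)=2/5$ and $\lim_k\psi_2(9\cdot 2^k)=64/(5\cdot 9^{\theta_2})$.

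\emph{The gap.} For $p\ge 3$ in part (1) the inequality $3/2^{\theta_p+1}<1/2$ is elementary (since $\theta_p>\theta_2$), and something like your two sequences would suffice. But for $p=2$ in part (1) your two candidate sequences $u=2^k$ and $u=3\cdot 2^{k-1}$ give limits $1/2$ and $3^{1-\theta_2}$, and for part (2) the analogous limits are $2/5$ versus an expression of the form $c\cdot 9^{-\theta_2}$. Proving these are \emph{unequal} is not a numerical matter: equality would force $\theta_2=\log 3/\log 2$ to satisfy a nontrivial algebraic relation over $\mathbb{Q}$, and ruling that out requires the Gel'fond--Schneider theorem (transcendence of $\theta_2$). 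This is precisely the tool the paper invokes, and it is absent from your plan. Without it, the non-constancy step for $p=2$ in (1) and for all of (2) cannot be closed.
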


It is significant to consider $\Paszeta{p}{P}{1}{s}$ in the context of fractal geometry.
Actually, it can be realized as the geometric zeta function of a certain fractal string with a scale transformation of $s$. 
Such a zeta function contains some geometric information of the fractal string in its poles, which are called the complex dimensions. 
In particular, important is the relationship between the existence of non-real poles 
on the axis of absolute convergence of the geometric zeta function and the Minkowski measurability of the corresponding fractal string, 
which has some connection with the zeros of the Riemann zeta function in the critical strip via spectrum theory.
(For details, see \cite{Lap-vFr} for example.)

We will prove Theorem \ref{thGoal} by imitating Essouabri's proof in the following sections. 
To make the same argument as his, we investigate the behavior of the functions connected with the desired poles of $\Paszeta{p}{P}{1}{s}$.
Those functions are defined as follows: 
\begin{align*}
\summatory{p}{P}{u} &= \# \tsetcond{(m,n) \in \Pas{p}}{P(m,n) < u}, \\
\coefficient{p}{P}{u} &= \summatory{p}{P}{u} / u^{\theta_p /d},
\end{align*}
where $P \in \Real[X,Y]$ is a $\T$-elliptic polynomial with degree $d \geq 1$ and $u \in \Natural$.
Of course the former is an analog to $\smX{p}{u}$ and the latter is that to $\smX{p}{u}/u^{\theta_p}$ appearing above. 
%We are going to prove Theorem \ref{thGoal} by imitating Essouabri's proof, 
%however, we have no estimates of a function whose behavior reflects whether $\Paszeta{p}{P}{1}{s}$ has a desired pole.
%So that, in the first place, we have to indicate what function we treat.

The following theorem shows the reason why we consider the function $\coefficient{p}{P}{u}$.

\begin{theorem} \label{thMainA}
Let $P \in \Real[X,Y]$ be a $\T$-elliptic polynomial with degree $d \geq 1$.
Moreover, assume that $P(x,y) \geq 0$ for every $(x,y) \in \Real^2$ with $x \geq y \geq 0$. 
($P$ shall be said to be ``$\T$-positive'' if this property is satisfied.)
Then we have $\summatory{p}{P}{u} \asymp u^{\theta_p /d}$ as $u \to \infty$. 
%the function $\summatory{p}{P}{u} = \# \tsetcond{(m,n) \in \Pas{p}}{P(m,n) < u}$ defined for $u \in \Natural$ 
%satisfies $\summatory{p}{P}{u} \asymp u^{\theta_p /d}$ as $u \to \infty$.
\end{theorem}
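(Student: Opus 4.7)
The plan is to reduce the claim to the classical Harborth--Stolarsky--Stein--Wilson estimate $\smX{p}{v} \asymp v^{\theta_{p}}$ recalled in Section \ref{secPrelim}, by showing that on $\T$ the polynomial $P$ is comparable to $x^{d}$ once $x$ is large. This will sandwich the region $\tsetcond{(x,y) \in \T}{P(x,y) < u}$ between two regions of the form $\tsetcond{(x,y) \in \T}{x < c\, u^{1/d}}$, from which the desired asymptotic on $\summatory{p}{P}{u}$ follows immediately.

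The key analytic step is a two-sided bound $P(x,y) \asymp x^{d}$ on $\T$ for large $x$. First I would exploit the homogeneity of the top-degree form $P^{*}$: writing $P^{*}(x,y) = x^{d} P^{*}(1, y/x)$ for $x > 0$ and noting that $y/x$ runs in the compact interval $[0,1]$ on $\T$, the $\T$-ellipticity hypothesis together with continuity yields positive constants $c_{1} \leq c_{2}$ with $c_{1} x^{d} \leq P^{*}(x,y) \leq c_{2} x^{d}$ on $\T$. A routine monomial bound then gives $\abs{P(x,y) - P^{*}(x,y)} = O(x^{d-1})$ on $\T$ for $x \geq 1$, since $0 \leq y \leq x$ and $\deg(P - P^{*}) \leq d-1$. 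Combining these produces a threshold $M_{0} \geq 1$ and constants $C_{1}, C_{2} > 0$ satisfying
\begin{equation*}
C_{1} x^{d} \leq P(x,y) \leq C_{2} x^{d} \qquad \text{whenever } (x,y) \in \T \text{ and } x \geq M_{0}.
\end{equation*}
The $\T$-positivity hypothesis enters only to guarantee that $\tsetcond{(m,n) \in \Pas{p}}{P(m,n) < u}$ is a bounded set for every $u$, with no parasitic tail contributed by any $(m,n) \in \T$ at which $P$ would otherwise be negative.

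From this two-sided bound, for $(m,n) \in \Pas{p}$ with $m \geq M_{0}$, the condition $P(m,n) < u$ forces $m < C_{1}^{-1/d} u^{1/d}$ and is in turn implied by $m < C_{2}^{-1/d} u^{1/d}$. Absorbing the finitely many $(m,n) \in \Pas{p}$ with $m < M_{0}$ into bounded error terms, I obtain
\begin{equation*}
\smX{p}{\floor{C_{2}^{-1/d} u^{1/d}}} - O(1) \leq \summatory{p}{P}{u} \leq \smX{p}{\floor{C_{1}^{-1/d} u^{1/d}} + 1} + O(1),
\end{equation*}
and plugging in $\smX{p}{v} \asymp v^{\theta_{p}}$ on both sides delivers $\summatory{p}{P}{u} \asymp u^{\theta_{p}/d}$. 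The only step requiring any care is the uniform polynomial bound above, but this is standard bookkeeping and presents no real obstacle; all the substantive content of the theorem is inherited from the sharp asymptotic on $\smX{p}{v}$.
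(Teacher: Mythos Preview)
Your proposal is correct and follows essentially the same route as the paper: both reduce to the known asymptotic $\smX{p}{v} \asymp v^{\theta_p}$ by establishing a two-sided comparison $P(x,y) \asymp x^{d}$ on $\T$ for large $x$, then sandwiching $\summatory{p}{P}{u}$ between two values of $\smX{p}{\cdot}$. The only cosmetic difference is that the paper cites Essouabri's Lemma~\ref{lemEll} to get $P(x,y) \asymp \norm{(x,y)}^{d}$ and then converts to $x$ via $x \leq \norm{(x,y)} \leq 2x$ on $\T$, whereas you prove the comparison with $x^{d}$ directly by the same homogeneity-plus-compactness argument; the content is identical. (One small aside: your remark on $\T$-positivity slightly overstates its role---$\T$-ellipticity alone already forces the level sets to be bounded, so without $\T$-positivity there would be only finitely many points with $P(m,n)<0$, an $O(1)$ effect that does not disturb the asymptotic.)
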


%In the second place, 
%$\coefficient{p}{P}{u} = \summatory{p}{P}{u} / u^{\theta_p /d}$ to ensure Theorem \ref{thGoal}.
We give some necessary estimates for the bounded function $\coefficient{p}{P}{u}$ to ensure Theorem \ref{thGoal}.

\begin{theorem} \label{thMainB}
$(1)$\, For every prime $p$, the function $\coefficient{p}{X+Y}{u}$ fails to converge as $u \to \infty$. \\
$(2)$\, The function $\coefficient{2}{X+2Y}{u}$ fails to converge as $u \to \infty$.
\end{theorem}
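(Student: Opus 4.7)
The plan is to prove each part of Theorem~\ref{thMainB} by exhibiting two subsequences of $u\to\infty$ along which $\coefficient{p}{P}{u}$ tends to distinct limits, so that no overall limit can exist. The common engine is the self-similarity of $\Pas{p}$ via Lucas' theorem: the map $(m,n,a,b)\mapsto(pm+a,\,pn+b)$, with $(m,n)\in\Pas{p}$ and $0\le b\le a\le p-1$, is a bijection onto $\Pas{p}$, which for any linear polynomial $P$ gives the functional equation
\begin{equation*}
\summatory{p}{P}{u}=\sum_{0\le b\le a\le p-1}\summatory{p}{P}{(u-P(a,b))/p}.
\end{equation*}
For $P=X+Y$ this collapses on integer $v$ to $\summatory{p}{P}{pv}=A\,\summatory{p}{P}{v}+B\,\summatory{p}{P}{v-1}$ with explicit $A,B\in\Natural$; the involution $(a,b)\leftrightarrow(p-1-b,\,p-1-a)$ yields $B=(p^2-1)/4$ for odd $p$. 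For $P=X+2Y$ and $p=2$, one derives $\counting{2}{P}{2k}=\counting{2}{P}{k}$ and $\counting{2}{P}{2k+1}=\counting{2}{P}{k}+\counting{2}{P}{k-1}$, hence $\summatory{2}{P}{2v}=2\summatory{2}{P}{v}+\summatory{2}{P}{v-1}$.

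For Part~(1), my first subsequence is $u_k=p^k$: using the identity $\counting{p}{X+Y}{p^k-1}=((p+1)/2)^k$ for odd $p$ (and $=1$ for $p=2$), the recursion integrates to $\summatory{p}{X+Y}{p^k}=\tfrac12(p(p+1)/2)^k+O(((p+1)/2)^k)$, so $\coefficient{p}{X+Y}{p^k}\to 1/2$. The second subsequence will be $v_k=(p+1)p^{k-1}$, along which an analogous iteration yields a limit of the form $C_p/(p+1)^{\theta_p}$ with $C_p\in\Rational_{>0}$. For Part~(2), I take $u_k=2^k$: the Fibonacci identity $\counting{2}{X+2Y}{2^k-1}=F_{k+1}$, which follows from $\counting{2}{X+2Y}{2^k-2}=\counting{2}{X+2Y}{2^{k-1}-1}$, makes $a_k:=\summatory{2}{X+2Y}{2^k}$ satisfy the non-homogeneous recursion $a_k=3a_{k-1}-F_k$; telescoping and the identity $\sum_{j\ge 1}F_j/3^j=3/5$ extract $\coefficient{2}{X+2Y}{2^k}\to 2/5$. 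Running the same scheme along $v_k=3\cdot 2^{k-1}$, whose boundary $\counting{2}{X+2Y}{3\cdot 2^{k-1}-1}$ itself satisfies a Fibonacci recurrence, produces $\coefficient{2}{X+2Y}{v_k}\to(11/5)\cdot 3^{-\theta_2}$.

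The decisive step, and the main obstacle I anticipate, is verifying in each part that the two candidate limits are genuinely distinct. In Part~(1) this demands $2C_p\ne(p+1)^{\theta_p}$ for every prime $p$; since $p^{\theta_p}=p(p+1)/2$ is already rational, one needs the irrationality of the separate quantity $(p+1)^{\theta_p}$, presumably via Baker's theorem on linear forms in $\log 2,\log p,\log(p+1)$. In Part~(2) it amounts to $3^{\theta_2}=3^{\log_2 3}\ne 11/2$, which should admit a self-contained numerical verification via effective bounds on $\log_2 3$. Handling the Fibonacci-like boundary terms uniformly across both sequences, and giving a clean treatment of these transcendence-flavoured non-equalities, is expected to be the most technically delicate component of the proof.
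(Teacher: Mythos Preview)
Your overall architecture---deriving a recursion for $\summatory{p}{P}{pu}$ from the Lucas bijection, iterating it along $p^k u$, and extracting explicit accumulation points of $\coefficient{p}{P}{\cdot}$---is exactly the paper's method, and your recursions, Fibonacci identities, and first limits ($1/2$ for $X+Y$, $2/5$ for $X+2Y$) are all correct. The divergence from the paper is entirely in the choice of the second subsequence and the tool used to separate the two limits.

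For Part~(1) with odd $p$, your proposal has a genuine gap. Choosing $u=p+1$ reduces the separation to showing $(p+1)^{\theta_p}\notin\Rational$. You suggest Baker's theorem, but Baker concerns \emph{linear} forms in logarithms; the relation $(p+1)^{\theta_p}=r\in\Rational$, combined with $p^{\theta_p}=p(p+1)/2\in\Rational$, is a \emph{multiplicative} (four-exponentials) statement: with $x_1=1$, $x_2=\theta_p$ and $y_1=\log p$, $y_2=\log(p+1)$, all four $e^{x_iy_j}$ would be algebraic. That is precisely a case of the four exponentials conjecture, which is open. (Gel'fond--Schneider does not help either, since $\theta_p$ is transcendental, not algebraic irrational.) The paper sidesteps this completely by taking $u=2$: the second limit is then $3/2^{\theta_p+1}$, and $3/2^{\theta_p+1}<3/2^{\theta_2+1}=1/2$ for $p\ge3$ follows from the elementary monotonicity $\theta_p>\theta_2$---no transcendence needed at all. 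Only the residual case $p=2$ (where both you and the paper use $u=3$) requires Gel'fond--Schneider, via the algebraic relation $\theta_2^2-\theta_2-1=0$.

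For Part~(2), your choice $u=3$ is not wrong, but it forces you into proving $3^{\theta_2}\neq 11/2$, equivalently $2^{\,1+\theta_2^2}\neq 11$, which is not an algebraic constraint on $\theta_2$ and so does not reduce to Gel'fond--Schneider; you would indeed need a rigorous numerical bound. The paper instead takes $u=9$, giving the second limit $64/(5\cdot 9^{\theta_2})$; equality with $2/5$ would force $9^{\theta_2}=32$, i.e.\ $2\theta_2^2=5$, which \emph{is} algebraic and is dispatched by Gel'fond--Schneider. So the paper's choice buys a uniform, transcendence-based argument in place of interval arithmetic.
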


We will prove that Theorem \ref{thMainB} implies Theorem \ref{thGoal} at the end of Section \ref{secPrelim} and 
the above two theorems Theorem \ref{thMainA} and Theorem \ref{thMainB} in Section \ref{secMainA} and Section \ref{secMainB}, respectively.
 
To prove the second part of Theorem \ref{thMainB}, we actually observe some arithmetic properties of $\summatory{p}{X+pY}{u}$. 
There appears an algebraic difficulty when $p \geq 3$, so that we obtain the result only in the case when $p = 2$. 
It is mentioned again in Section \ref{secMainB} why we have to restrict $p$.
The desired results in Theorem \ref{thMainB} are derived from some elementary calculations and the Gel'fond--Schneider theorem, 
which gives the affirmative answer for Hilbert's seventh problem: 
``Does $\alpha^\beta$ become transcendental when $\alpha$ and $\beta$ are algebraic over $\Rational$ 
with $\alpha \neq 0, 1$ and $\beta \notin \Rational$?''
Our calculations are out of analytic estimates unlike the former studies \cite{Ste89} and \cite{Wil98}.

In our proof of Theorem \ref{thMainB}, we can obtain certain bounds 
for $\limsup_{u \to \infty} \coefficient{p}{P}{u}$ and $\liminf_{u \to \infty} \coefficient{p}{P}{u}$ concerned with our arguments. 
Those bounds are cruder, compared with sharper results in \cite{Har77}, \cite{Sto77}, \cite{Ste89} and \cite{Wil98}, 
but they are sufficient for our present purpose.
It seems to be more difficult to improve the bounds 
since no effective formula expressing the value of $\summatory{p}{P}{u}$ is known unlike the case when $P(X,Y) = X$.

%%%%%%%%%%%%%%%%%%%%%%%%%%%%%%%%%%%%%%%%%%%%%%%%%%%%%%%%%%%%%%%%%%%%%%%%%%%%%%%%%%%%%%%%%%%%%%%%%%%%%%%%%%%%%%%%%%%%%%%%%%%%%%%%%%%%%%%%%%
\section{Some known results} \label{secPrelim}
First, we give some notations.
\begin{itemize}
\item For $x \in \Real$, let $\floor{x}$ denote the largest integer that is not more than $x$ 
and $\ceil{x}$ denotes the smallest integer that is not less than $x$. 

\item Let $\norm{\,\cdot\,}$ denote the Euclidean norm on $\Real^2$, i.e. $\norm{(x,y)} = \sqrt{x^2 + y^2}$ for $(x,y) \in \Real^2$.

\item Let $f(x)$ and $g(x)$ be functions defined on a subset of $\Real$, e.g. $(0, \infty)$.
We write $f(x) \ll g(x) \, (x \to \infty)$ 
if there exists a positive constant $C$ such that $\abs{f(x)} \leq Cg(x)$ holds for any sufficiently large $x$. 
(It means the same as $f(x) = O(g(x)) \, (x \to \infty)$.)
We use the notation $f(x) \asymp g(x) \, (x \to \infty)$ 
if both $f(x) \ll g(x) \, (x \to \infty)$ and $g(x) \ll f(x) \, (x \to \infty)$ hold. 

\item Let $\T$, $\TLattice$ and $\T(R)$ with $R>0$ each denote the following subset of $\Real^2$; 
$\T = \tsetcond{(x,y) \in \Real^2}{x \geq y \geq 0}$, 
$\TLattice = \T \cap (\Nonnegative \times \Nonnegative)$ and 
$\T(R) = \setcond{(x,y) \in \T}{\norm{(x,y)} \geq R}$.
%Another notation $\T(R)$ with $R>0$ denotes $\T(R) = \tsetcond{(x,y) \in \Real^2}{\norm{(x,y)} \geq R}$.

\item For a prime number $p$, let $I_p = \tset{0,1,\dots,p-1}$ and
$\Pas{p} = \tsetcond{(m,n) \in \TLattice}{\binom{m}{n} \not\equiv 0 \pmod p}$.
In addition, a real number $\theta_p$ is defined as $\theta_p = \log \frac{p(p+1)}{2} / \log p$, 
or $p^{\theta_p} = \frac{p(p+1)}{2}$ equivalently.

\item We shall approve the situation in which $a = \sum_{j=0}^{h} a_j p^j$ with $a_j \in I_p$, 
the expansion of $a \in \Nonnegative$ in the base a prime $p$, has its top digits $a_h, a_{h-1},\dots, a_{h-k}$ being all $0$
for some $0 \leq k \leq h$.
%For a prime number $p$, 
%the expansion of $a \in \Nonnegative$ in the base $p$ allows its top digits to be $0$.
%In brief, written as $a = \sum_{i=0}^{h} a_i p^i$ with $a_i \in I_p$, it is possible the case $a_h = a_{h-1} =\dots= a_{h-k} = 0$. 

\item Let $s = \sigma + it \in \Complex$ consists of its real part $\sigma \in \Real$, its imaginary part $t \in \Real$ and 
the imaginary unit $i$.
\end{itemize}

We will introduce three functions which perform the leading roles in this article. 
Before giving the definitions of them, we recall a significant property of polynomials, say, being ``$\T$-elliptic.''
Incidentally, we provide a jargon ``$\T$-positive'' for convenience.

\begin{definition}
Let $P \in \Real[X,Y]$ be a non-constant polynomial with its degree $d \geq 1$ and 
$P_d \in \Real[X,Y]$ be the $d$-degree part of $P$, 
namely, the homogeneous polynomial uniquely determined by $\deg (P- P_d) < \deg P$. \\
$(1)$\, $P$ is said to be \textbf{$\T$-elliptic} if $P_d (x,y) > 0$ holds for every $(x,y) \in \T \setminus \tset{(0,0)}$. \\
$(2)$\, $P$ is said to be \textbf{$\T$-positive} if $P(x,y) \geq 0$ holds for every $(x,y) \in \T$.
\end{definition}

\begin{remark}
Being $\T$-elliptic does not necessarily imply being $\T$-positive and vice versa.
Indeed, we can easily check, for example, that the polynomial $Y$ is not $\T$-elliptic but $\T$-positive and
$X-1$ satisfies the contrary.
However, in fact, a $\T$-elliptic polynomial $P$ can be shifted by some $c>0$ as $P+c$ becomes also $\T$-positive.
This actually follows from Lemma \ref{lemEll} appearing later in Section \ref{secMainA}.
%Here we introduced two concepts of polynomials, and then we should be aware that one of them does not necessarily imply the other.
%To be specific, we can easily check that the polynomial $Y$ is not $\T$-elliptic but $\T$-positive and $X-1$ satisfies the contrary.
\end{remark}

Now we define three important functions in this article.

\begin{definition}
Assume that $P \in \Integer[X,Y]$ is a $\T$-elliptic and $\T$-positive polynomial with its degree $d \geq 1$.
We let $\counting{p}{P}{q}$, $\summatory{p}{P}{u}$ and $\coefficient{p}{P}{u}$ denote 
the functions in $q \in \Nonnegative$ or $u \in \Natural$ as follows: 
\begin{align*}
\counting{p}{P}{q} &= \# \setcond{(m,n) \in \Pas{p}}{P(m, n) = q}, \\
\summatory{p}{P}{u} &= \# \setcond{(m,n) \in \Pas{p}}{P(m, n) < u} = \sum_{0 \leq q < u} \counting{p}{P}{q}, \\
%\intertext{and}
\coefficient{p}{P}{u} &= \frac{\summatory{p}{P}{u}}{u^{\theta_p /d}},
\end{align*}
respectively.
\end{definition}

We note that $\coefficient{p}{P}{u}$ is a bounded function, provided Theorem \ref{thMainA} is true.

\begin{remark}
If $P \in \Real[X,Y]$ is $\T$-elliptic, the curve $\set{P(x,y) = u} \subset \Real^2$ always cuts some bounded domains away from $\T$. 
This is the reason why $\counting{p}{P}{q}$ and $\summatory{p}{P}{u}$ are well-defined. 
However, being $\T$-elliptic is not the necessary condition of well-definedness of $\counting{p}{P}{q}$ and $\summatory{p}{P}{u}$. 
For example, consider the polynomial $XY+X$, not $\T$-elliptic.
\end{remark}

It has been studied how many points of $\Pas{p}$ are included in the first $u$ columns, 
the number of which is expressed by $\smX{p}{u} = \summatory{p}{X}{u}$ here. 
In particular, the asymptotic behavior of $\smX{p}{u}$ was often considered in former studies. 

It is trivially estimated that $\smX{p}{u} = O(u^2) \, (u \to \infty)$ and 
the first non-trivial estimate $\smX{p}{u} = o(u^2) \, (u \to \infty)$ was given by Fine \cite{Fin47}. 
At present it is known that the concrete order of $\smX{p}{u}$ coincides with $u^{\theta_p}$. 
In case $p=2$, such a result was earlier given by Stolarsky \cite{Sto77} than that for the other $p$'s; 
it was shown that $u^{\theta_2}/3 < \smX{2}{u} < 3u^{\theta_2}$. 

Evaluations of $\alpha_p = \limsup_{u \to \infty} \coefficient{p}{X}{u}$ and of $\beta_p = \liminf_{u \to \infty} \coefficient{p}{X}{u}$
for $p=2$ appeared in the same article, which are $1 \leq \alpha_2 \leq 1.052$, $0.72 \leq \beta_2 \leq 3^{\theta_2}/7 \, (< 0.815)$. 
Sharper evaluations of $\alpha_2$ and $\beta_2$ were given by Harborth \cite{Har77}, 
which state that $\alpha_2 = 1$ (of course being the sharpest one) and 
$0.812556 \leq \beta_2 < 0.812557$ (the exact value computed to the sixth decimal). 
While the superior limit value $\alpha_p$ for general $p$'s was explicitly computed by Stein \cite{Ste89} that $\alpha_p = 1$, 
it is too difficult to compute $\beta_p$'s exactly at present. 
However, a general evaluation of $\beta_p$ that 
\begin{equation*}
\left( 1-2^{\frac{1}{1-\theta_p}} \right)^{{\theta_p}-1} \leq \beta_p < \frac{3-\theta_p}{2(2-\theta_p)^{2-\theta_p}}
\end{equation*}
was given by Wilson \cite{Wil98}, which is useful for our purpose because of its assurance that $\beta_p < 1$ for $p \geq 3$.

We conclude this section with a generalization of Theorem \ref{thEss}. 
This proof is based on Essouabri's method.
It is to be emphasized that the above results yield $\alpha_p \neq \beta_p$.

\begin{theorem}
Let $P \in \Integer[X,Y]$ be a $\T$-elliptic and $\T$-positive polynomial with its degree $d \geq 1$ and 
assume that $\coefficient{p}{P}{u}$ fails to converge as $u \to \infty$.
Then we find that $\Paszeta{p}{P}{1}{s}$ has a non-real pole on its axis of absolute convergence $\tset{\sigma = \theta_p}$. 
\end{theorem}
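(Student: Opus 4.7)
The plan is to follow Essouabri's strategy via the Wiener--Ikehara Tauberian theorem, arguing by contradiction. First I would rewrite the zeta function as a Mellin--Stieltjes transform of $\summatory{p}{P}{u}$: grouping the sum according to the integer value $q = P(m,n)$, and using that $P$ is integer-valued and $\T$-positive so that $\counting{p}{P}{q} \geq 0$, one has
\begin{equation*}
\Paszeta{p}{P}{1}{s} \;=\; \sum_{q \geq 1} \counting{p}{P}{q}\, q^{-s/d} \;=\; \int_{1^{-}}^{\infty} u^{-s/d}\, d\summatory{p}{P}{u}.
\end{equation*}
This is a genuine generalised Dirichlet series with non-negative coefficients, so Tauberian tools apply. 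By Theorem \ref{thMainA} we have $\summatory{p}{P}{u} \asymp u^{\theta_p/d}$, hence the abscissa of absolute convergence of $\Paszeta{p}{P}{1}{s}$ is exactly $\theta_p$, and Essouabri's results provide a meromorphic continuation past the line $\tset{\sigma = \theta_p}$ together with a (simple) real pole at $s = \theta_p$.

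Next I would suppose, for contradiction, that $\Paszeta{p}{P}{1}{s}$ has no non-real pole on the line $\tset{\sigma = \theta_p}$. Then, after subtracting the principal part at $s = \theta_p$, the resulting function is holomorphic on an open neighbourhood of the closed half-plane $\tset{\sigma \geq \theta_p}$ with suitable boundary regularity. Changing variable to $s' = s/d$ and applying the classical Wiener--Ikehara theorem \cite[Chapter 5, Corollary 1]{Hla-Sch-Tas} to the non-decreasing function $\summatory{p}{P}{u}$, one deduces the existence of a finite constant $C$ such that
\begin{equation*}
\lim_{u \to \infty} \frac{\summatory{p}{P}{u}}{u^{\theta_p/d}} \;=\; C,
\end{equation*}
i.e., $\coefficient{p}{P}{u}$ converges as $u \to \infty$. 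This contradicts the hypothesis, so at least one pole of $\Paszeta{p}{P}{1}{s}$ must lie on the axis $\tset{\sigma = \theta_p}$ with non-zero imaginary part. (In the statement of the theorem only one such pole is asserted, but since $\Paszeta{p}{P}{1}{s}$ has real Dirichlet coefficients, the Schwarz reflection principle automatically supplies the complex-conjugate pole as well.)

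The main obstacle is not a calculation but an invocation: one needs to check that the hypotheses of Wiener--Ikehara are actually in place, namely that the pole at $s = \theta_p$ is simple (so that its principal part has the standard form $A/(s - \theta_p)$) and that the meromorphic continuation guaranteed by Essouabri admits the appropriate continuity on the line $\tset{\sigma = \theta_p}$ away from $\theta_p$. Both follow from Essouabri's analysis of $\Paszeta{p}{P}{Q}{s}$, but they should be cited or recapitulated carefully, because standard formulations of Wiener--Ikehara are sensitive to the precise boundary regularity of the Dirichlet series on its axis of absolute convergence.
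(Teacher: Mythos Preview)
Your proof is correct and follows essentially the same route as the paper: both argue by contradiction, invoke Essouabri's meromorphic continuation and the simplicity of the pole at $\theta_p$ (the paper adds Landau's theorem to pin down that $\theta_p$ is indeed a singularity), and then apply the Wiener--Ikehara theorem to force convergence of $\coefficient{p}{P}{u}$. The only cosmetic difference is the normalisation: the paper substitutes $f(u)=\summatory{p}{P}{\ceil{u^{d/\theta_p}}}$ and rescales $s\mapsto \theta_p s$ so that the pole sits at $s=1$, whereas you rescale by $s\mapsto s/d$ and use the general form of Wiener--Ikehara with pole at $\theta_p/d$.
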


\begin{proof}
First, we remark that Landau's theorem (see \cite[Theorem 10]{Har-Rie} for example) implies that the point $s = \theta_p$ 
is a singularity of $\Paszeta{p}{P}{1}{s}$.
We should recall that Essouabri \cite{Ess05} mentioned that all singularities of $\Paszeta{p}{P}{1}{s}$ including $s = \theta_p$ 
must be simple poles.

Now we assume that $\Paszeta{p}{P}{1}{s}$ had no non-real poles on its axis of absolute convergence $\tset{\sigma = \theta_p}$.

Let $f(u) = \summatory{p}{P}{\ceil{u^{d/\theta_p}}}$ for $u>0$.
Then we find that 
\begin{equation*}
\Paszeta{p}{P}{1}{\theta_p s} = \int_{1^{-}}^{\infty} u^{-s} \, df(u).
\end{equation*}
Apply the Wiener--Ikehara theorem (see \cite[Chapter 5, Corollary 1]{Hla-Sch-Tas} for example) to obtain that $f(u)/u$ converges as $u \to \infty$.
This implies that $\summatory{p}{P}{\ceil{u}}/u^{\theta_p/d}$ converges as $u \to \infty$, 
but this is a contradiction to the assumption that $\coefficient{p}{P}{u}$ fails to converge.
\end{proof}

This theorem combined with Theorem \ref{thMainB} gives our goal Theorem \ref{thGoal}, 
so that it remains just to prove Theorem \ref{thMainA} and Theorem \ref{thMainB}.

%%%%%%%%%%%%%%%%%%%%%%%%%%%%%%%%%%%%%%%%%%%%%%%%%%%%%%%%%%%%%%%%%%%%%%%%%%%%%%%%%%%%%%%%%%%%%%%%%%%%%%%%%%%%%%%%%%%%%%%%%%%%%%%%%%%%%%%%%%
\section{Proof of Theorem \ref{thMainA}} \label{secMainA}
We will prove Theorem \ref{thMainA} in this section and Theorem \ref{thMainB} in the next section.
In order to prove the former, we need the following lemma from Essouabri's article \cite{Ess05}. 

\begin{lemma} \label{lemEll}
%\begin{lemma}[\cite[Lemma 2]{Ess05}] \label{lemEll}
Let $P \in \Real[X,Y]$ be a $\T$-elliptic polynomial with its degree $d \geq 1$.
Then there exist positive real numbers $c_1$, $c_2$ and $R$ such that $c_1 \norm{(x,y)}^d \leq P(x,y) \leq c_2 \norm{(x,y)}^d$ holds for
every $(x,y) \in \T(R)$.
\end{lemma}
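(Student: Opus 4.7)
The plan is to reduce the two-sided bound to the top-degree part $P_d$ via homogeneity and compactness, then absorb the lower-degree remainder for large norms.

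First I would write $P = P_d + R$ with $\deg R \leq d-1$, and exploit the homogeneity of $P_d$. For any nonzero $(x,y) \in \T$, set $r = \norm{(x,y)}$ and $(\tilde x, \tilde y) = (x/r, y/r)$; then $(\tilde x, \tilde y)$ lies in the compact set $K = \T \cap \setcond{(u,v) \in \Real^2}{u^2 + v^2 = 1}$, and $P_d(x,y) = r^d P_d(\tilde x, \tilde y)$. Since $P$ is $\T$-elliptic, $P_d$ is continuous and strictly positive on $K$, hence attains a positive minimum $m_1$ and positive maximum $m_2$ on $K$. This yields the clean sandwich
\begin{equation*}
m_1 \norm{(x,y)}^d \leq P_d(x,y) \leq m_2 \norm{(x,y)}^d \qquad \text{for all } (x,y) \in \T.
\end{equation*}

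Next I would control the lower-order part. Since $R \in \Real[X,Y]$ has total degree at most $d-1$, a trivial term-by-term estimate gives a constant $M>0$ with $\abs{R(x,y)} \leq M \norm{(x,y)}^{d-1}$ whenever $\norm{(x,y)} \geq 1$. Choosing $R_0 \geq 1$ so that $M / R_0 \leq m_1/2$, any $(x,y) \in \T(R_0)$ satisfies $\abs{R(x,y)} \leq (m_1/2) \norm{(x,y)}^d$. Combining with the previous display,
\begin{equation*}
\frac{m_1}{2} \norm{(x,y)}^d \leq P(x,y) \leq \left( m_2 + \frac{m_1}{2} \right) \norm{(x,y)}^d \qquad \text{for all } (x,y) \in \T(R_0),
\end{equation*}
so the lemma follows with $c_1 = m_1/2$, $c_2 = m_2 + m_1/2$, and $R = R_0$.

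The argument is essentially a textbook application of compactness of the unit arc in $\T$ together with the standard polynomial growth estimate, so I do not anticipate a real obstacle; the only thing to be careful about is ensuring that $K$ is nonempty and compact (which it is, being a closed bounded subset of $\Real^2$) and that the minimum $m_1$ is genuinely positive, which is exactly what $\T$-ellipticity guarantees.
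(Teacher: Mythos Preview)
Your proof is correct and is the standard compactness-plus-homogeneity argument for this type of estimate. The paper itself does not prove this lemma but simply refers to \cite[Lemma 2]{Ess05}; your argument is exactly the expected elementary proof, so there is nothing to compare.
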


For the proof, we refer to \cite[Lemma 2]{Ess05}.

Now we begin the proof of Theorem \ref{thMainA}.
First of all, we note that the desired estimate can be easily obtained by the above evaluations of $\alpha_p$ and $\beta_p$ 
in the previous articles \cite{Har77}, \cite{Sto77}, \cite{Ste89}, \cite{Wil98} when $P(X,Y) = X$, for example, 
\begin{equation} \label{3-C}
\frac{\beta_p}{2} u^{\theta_p} \leq \summatory{p}{X}{u} \leq 2 \alpha_p u^{\theta_p}
\end{equation}
for any sufficiently large $u \in \Natural$.

Let us consider the general case.
By Lemma \ref{lemEll}, we can find positive numbers $c_1$, $c_2$ and $R$ such that $c_1 \norm{(x,y)}^d \leq P(x,y) \leq c_2 \norm{(x,y)}^d$
for every $(x,y) \in \T(R)$. 
Since any $(x,y) \in \T$ satisfies $x \leq \norm{(x,y)} \leq 2x$, we have that 
\begin{equation} \label{3-A}
\frac{1}{2} \left( \frac{P(x, y)}{c_2} \right)^{1/d} \leq x \leq \left( \frac{P(x, y)}{c_1} \right)^{1/d}
\end{equation}
whenever $(x,y) \in \T(R)$.
By \eqref{3-A}, we find that all points $(x,y)$ on the curve segment $\tset{P(x,y) = u} \cap \T(R)$ satisfy 
\begin{equation} \label{3-B}
\frac{1}{2} \left( \frac{u}{c_2} \right)^{1/d} \leq x \leq \left( \frac{u}{c_1} \right)^{1/d}.
\end{equation}
If $u \in \Natural$ is sufficiently large, we find that the same curve segment must be contained in $\T(R)$ and \eqref{3-B} holds
for any $(x,y) \in \T$ with $P(x,y) = u$. 
This implies that 
\begin{equation*}
\summatory{p}{X}{\floor{\frac{1}{2} \left( \frac{u}{c_2} \right)^{1/d}}} 
\leq \summatory{p}{P}{u} 
\leq \summatory{p}{X}{\ceil{\left( \frac{u}{c_1} \right)^{1/d}}}
\end{equation*}
and we can conclude by \eqref{3-C} that, for example,
\begin{equation*}
\frac{\beta_p}{2 \cdot 4^{\theta_p} c_{2}^{\theta_p/d}} u^{\theta_p/d} 
%\leq \frac{\beta_p}{2} \floor{\frac{1}{2} \left( \frac{u}{c_2} \right)^{\frac{1}{d}}}^{\theta_p}
\leq \summatory{p}{P}{u} 
%\leq \frac{3}{2} \ceil{\left( \frac{u}{c_1} \right)^{\frac{1}{d}}}^{\theta_p} 
\leq \frac{2^{\theta_p +1} \alpha_p}{c_{1}^{\theta_p/d}} u^{\theta_p/d}
%\leq \frac{3}{2} \cdot 2^{\theta_p} \left( \frac{u}{c_1} \right)^{\frac{\theta_p}{d}} 
\end{equation*}
as desired. 
(Note that $\floor{u} \geq u/2$ and $\ceil{u} \leq 2u$ when $u \geq 2$.)

%%%%%%%%%%%%%%%%%%%%%%%%%%%%%%%%%%%%%%%%%%%%%%%%%%%%%%%%%%%%%%%%%%%%%%%%%%%%%%%%%%%%%%%%%%%%%%%%%%%%%%%%%%%%%%%%%%%%%%%%%%%%%%%%%%%%%%%%%%
\section{Proof of Theorem \ref{thMainB}} \label{secMainB}

Finally, we come to the stage of proving Theorem \ref{thMainB},
%,the main theorem in this paper.
which claims that none of $\coefficient{p}{X+Y}{u}$ and $\coefficient{2}{X+2Y}{u}$ converge as $u \to \infty$.

The following proposition plays a key role in our proof.

\begin{proposition} \label{prLucas}
Suppose that $(m, n) \in \TLattice$ with its coordinates $m$ and $n$ expressed as $m = \sum_{j=0}^{h} m_j p^j$ and 
$n = \sum_{j=0}^{h} n_j p^j$ in the base $p$, respectively.
Then $(m, n)$ belongs to $\Pas{p}$ if and only if $m_j \geq n_j$ for $j = 0,1,\dots,h$, 
with the convention that $a<b$ implies $\binom{a}{b} = 0$.
\end{proposition}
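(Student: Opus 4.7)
The plan is to prove Lucas' classical congruence
\begin{equation*}
\binom{m}{n} \equiv \prod_{j=0}^{h} \binom{m_j}{n_j} \pmod{p},
\end{equation*}
from which the proposition is immediate: each factor $\binom{m_j}{n_j}$ with $0 \leq m_j, n_j < p$ satisfies $\binom{m_j}{n_j} \not\equiv 0 \pmod{p}$ precisely when $m_j \geq n_j$ (the case $n_j > m_j$ giving $\binom{m_j}{n_j} = 0$ by the stated convention, hence also $\equiv 0$), and the product is nonzero modulo $p$ if and only if every factor is.

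To establish the congruence, I would compare coefficients in the polynomial ring $\Integer[X]$ modulo $p$. The starting point is the Frobenius identity $(1+X)^p \equiv 1 + X^p \pmod{p}$, which follows from the divisibility $p \mid \binom{p}{k}$ for $0 < k < p$. Iterating yields $(1+X)^{p^j} \equiv 1 + X^{p^j} \pmod{p}$ for every $j \in \Nonnegative$. Writing $m = \sum_{j=0}^{h} m_j p^j$ in base $p$, I would then decompose
\begin{equation*}
(1+X)^m = \prod_{j=0}^{h} (1+X)^{m_j p^j} \equiv \prod_{j=0}^{h} \left( 1 + X^{p^j} \right)^{m_j} \pmod{p}.
\end{equation*}
Expanding each factor on the right by the binomial theorem, the coefficient of $X^n$ is the sum of $\prod_{j=0}^{h} \binom{m_j}{k_j}$ taken over tuples $(k_0, \ldots, k_h)$ with $0 \leq k_j \leq m_j$ and $\sum_{j=0}^{h} k_j p^j = n$. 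Since each $k_j$ is strictly less than $p$, the expression $\sum_j k_j p^j$ is already a base-$p$ representation of its value; uniqueness forces $k_j = n_j$ for every $j$, leaving the single term $\prod_{j=0}^{h} \binom{m_j}{n_j}$. Equating this with the coefficient $\binom{m}{n}$ of $X^n$ in $(1+X)^m$ on the left yields Lucas' formula.

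There is no serious obstacle: once the Frobenius identity is in hand, the argument is entirely formal. The only point requiring mild care is the appeal to uniqueness of the base-$p$ representation to isolate the contribution $\prod_j \binom{m_j}{n_j}$, which rests on the inequality $k_j \leq m_j < p$ built into the binomial expansion. Since this is a classical result going back to Lucas (1878), I would simply carry out the computation above and then read off the stated divisibility criterion.
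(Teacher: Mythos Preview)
Your proposal is correct and follows the same approach as the paper: both deduce the proposition as an immediate corollary of Lucas' congruence $\binom{m}{n} \equiv \prod_{j=0}^{h} \binom{m_j}{n_j} \pmod{p}$. The only difference is that the paper simply cites Lucas' formula with a reference, whereas you supply the standard generating-function proof via $(1+X)^{p^j} \equiv 1 + X^{p^j} \pmod{p}$; either way the logical route is identical.
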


\begin{proof}
This is an immediate corollary of Lucas' formula (introduced in his textbook \cite[Section 228]{Luc}): 
\begin{equation*}
\binom{m}{n} \equiv \prod_{j=0}^{h} \binom{m_j}{n_j} \pmod p.
\end{equation*}
(Note that $m_j$'s and $n_j$'s belong to $I_p$.)
\end{proof}

Let us begin the proof of the first part of Theorem \ref{thMainB}.
For simplicity, we abbreviate $\counting{p}{X+Y}{q}$, $\summatory{p}{X+Y}{u}$ and $\coefficient{p}{X+Y}{u}$ 
to $\ct{p}{q}$, $\sm{p}{u}$ and $\ce{p}{u}$, respectively.
In addition, we will extend the domain of $\phi_p$ to $\Integer$ for convenience: 
let $\ct{p}{q} = 0$ for $q<0$.
We are to prove several lemmas of which the first one is the essence of our proof.

\begin{lemma} \label{lemRec1}
Let $q$ be a non-negative integer and $r$ belong to $I_p$.
Then we have 
\begin{equation} \label{4-A}
\ct{p}{pq+r} = \floor{\frac{r}{2}+1} \ct{p}{q} + \floor{\frac{p-r}{2}} \ct{p}{q-1}.
\end{equation}
\end{lemma}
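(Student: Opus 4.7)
The plan is to split each pair $(m,n)\in\Pas{p}$ with $m+n=pq+r$ according to its lowest base-$p$ digits and to exploit Lucas' characterization from Proposition \ref{prLucas}. Write $m = pm' + m_0$ and $n = pn' + n_0$ with $m_0, n_0 \in I_p$. Since the digits of $m$ above the zeroth are precisely the digits of $m'$ (and similarly for $n$), Proposition \ref{prLucas} tells us that $(m,n) \in \Pas{p}$ if and only if $m_0 \geq n_0$ and $(m', n') \in \Pas{p}$.

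Next I would compute $m + n = p(m' + n') + (m_0 + n_0)$ and, since $0 \leq m_0 + n_0 \leq 2p - 2$, split according to whether the units add without carry or with carry:
\begin{itemize}
\item[(i)] \emph{No carry:} $m_0 + n_0 = r$ and $m' + n' = q$. Each pair $(m', n') \in \Pas{p}$ with $m' + n' = q$ contributes $\ct{p}{q}$-many overall, multiplied by the number of choices of $(m_0, n_0)$ with $m_0 \geq n_0 \geq 0$ and $m_0 + n_0 = r$.
\item[(ii)] \emph{Carry:} $m_0 + n_0 = p + r$ (only possible when $r \leq p - 2$, trivially true when the count below is zero for $r = p - 1$) and $m' + n' = q - 1$. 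This contributes the corresponding count of $(m_0, n_0)$ times $\ct{p}{q-1}$.
\end{itemize}

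The core of the proof is then the elementary enumeration of the digit pairs. In case (i), $(m_0, n_0)$ runs over $\{(r, 0), (r-1, 1), \ldots\}$ subject to $m_0 \geq n_0$, so $m_0 \in \{\ceil{r/2}, \ldots, r\}$, giving $\floor{r/2} + 1 = \floor{r/2 + 1}$ solutions. In case (ii), $m_0 + n_0 = p + r$ with $0 \leq n_0 \leq m_0 \leq p - 1$ forces $m_0 \in \{\ceil{(p+r)/2}, \ldots, p - 1\}$ (the lower bound $r + 1$ coming from $n_0 \leq p - 1$ is already dominated since $p > r$), yielding $p - \ceil{(p+r)/2} = \floor{(p-r)/2}$ solutions; note this correctly vanishes when $r = p - 1$.

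Combining (i) and (ii) yields the claimed identity \eqref{4-A}. The only delicate point is the bookkeeping between $\floor{\cdot}$ and $\ceil{\cdot}$ in the digit counts, together with confirming the parity identity $\floor{r/2} + 1 = \floor{r/2 + 1}$; everything else is a direct application of Proposition \ref{prLucas} and the decomposition $m + n = p(m' + n') + (m_0 + n_0)$.
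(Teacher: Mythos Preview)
Your proof is correct and follows essentially the same approach as the paper: both arguments strip off the lowest base-$p$ digit, invoke Proposition~\ref{prLucas} to decouple that digit from the rest, and split into two cases according to whether a carry occurs. The only cosmetic difference is that the paper parametrizes by $n$ alone (writing $m=q-n$ and analyzing the borrow in the subtraction), whereas you parametrize by the pair $(m_0,n_0)$ and analyze the carry in the addition $m+n$; the resulting digit counts are identical.
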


\begin{proof}
First, we remark that $\ct{p}{q}$ can be rewritten as 
$\ct{p}{q} = \# \tsetcond{n \in \Nonnegative}{(q-n, n) \in \Pas{p}}$.
In order to use Proposition \ref{prLucas}, we express $q$, $n$ and $q-n$ in the base $p$:
$q = \sum_{j=0}^{h} q_j p^j$, $n = \sum_{j=0}^{h} n_j p^j$ and $q-n = \sum_{j=0}^{h} m_j p^j$, respectively.
(We may assume that $q-n \geq 0$.)

Consider $v_a(q) = \# \tsetcond{n \in \Nonnegative}{n_0 = a, (q-n, n) \in \Pas{p}}$ for $a \in I_p$.
Then it is clear that $\ct{p}{q} = \sum_{a=0}^{p-1} v_a(q)$.

Actually we should calculate $v_a(q)$ to get the desired equation.
We note that
\begin{equation*}
m_0 = \begin{cases}
       q_0 - n_0 & (q_0 \geq n_0), \\
       q_0 - n_0 + p & (q_0 < n_0),
      \end{cases}
\end{equation*}
so that, by Proposition \ref{prLucas}, 
we find that $v_a(q) = 0$ if $m_0 < n_0 = a$, i.e. if $q_0/2 < a < q_0$ or $(q_0 + p)/2 < a \leq p-1$.
Otherwise, by the inequality $m_0 \geq n_0$ and Proposition \ref{prLucas}, we have that
\begin{align*}
v_a(q) &= \# \setcond{n \in \Nonnegative}{n_0 = a, m_j \geq n_j \, (j=1,2,\dots,h)} \\
       &= \# \setcond{\frac{n-a}{p} \in \Nonnegative}{\left( \frac{q-n-m_0}{p}, \frac{n-a}{p} \right) \in \Pas{p}} \\
       &= \begin{cases}
           \# \setcond{n' \in \Nonnegative}{\left( \frac{q-q_0}{p}-n', n' \right) \in \Pas{p}} & 
            \left( 0 \leq a \leq \frac{q_0}{2} \right), \\
           \# \setcond{n' \in \Nonnegative}{\left( \frac{q-q_0}{p}-1-n', n' \right) \in \Pas{p}} & 
            \left( q_0 \leq a \leq \frac{q_0+p}{2} \right) 
          \end{cases} \\
       &= \begin{cases}
           \ct{p}{\frac{q-q_0}{p}} & \left( 0 \leq a \leq \frac{q_0}{2} \right), \\
           \ct{p}{\frac{q-q_0}{p}-1} & \left( q_0 \leq a \leq \frac{q_0+p}{2} \right),
          \end{cases}
\end{align*}
where $n' = (n-a)/p$.

Hence we get 
\begin{align*}
\ct{p}{q} &= \sum_{a=0}^{\floor{q_0/2}} \ct{p}{\frac{q-q_0}{p}} 
           + \sum_{a=q_0+1}^{\floor{(q_0+p)/2}} \ct{p}{\frac{q-q_0}{p}-1} \\
          &= \floor{\frac{q_0}{2}+1} \ct{p}{\frac{q-q_0}{p}} + \floor{\frac{p-q_0}{2}} \ct{p}{\frac{q-q_0}{p}-1}.
\end{align*}
Replacing $q$ with $pq+r$, we obtain the conclusion.
\end{proof}

\begin{corollary} \label{corRec1}
Let $q$ be a positive integer and $k$ be a non-negative integer.
Then, 
\begin{equation} \label{4-B}
\ct{p}{p^k q-1} = \floor{\frac{p+1}{2}}^k \ct{p}{q-1}.
\end{equation}
\end{corollary}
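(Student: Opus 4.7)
The plan is a short induction on $k$, using Lemma \ref{lemRec1} as the only input. The base case $k=0$ is immediate since the asserted identity reduces to $\ct{p}{q-1} = \ct{p}{q-1}$. For the inductive step the crucial move is to peel off the bottom base-$p$ digit of $p^k q - 1$ via the identity
\begin{equation*}
p^k q - 1 = p \, (p^{k-1} q - 1) + (p-1),
\end{equation*}
which allows Lemma \ref{lemRec1} to be applied with the role of its $q$ played by $p^{k-1}q - 1$ and with $r = p-1$.

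Carrying this out yields
\begin{equation*}
\ct{p}{p^k q - 1}
  = \floor{\tfrac{p-1}{2}+1} \ct{p}{p^{k-1}q - 1}
   + \floor{\tfrac{p-(p-1)}{2}} \ct{p}{p^{k-1}q - 2}.
\end{equation*}
The fortunate point is that the second coefficient equals $\floor{1/2} = 0$, so the $\ct{p}{p^{k-1}q - 2}$ term drops out cleanly; meanwhile the first coefficient simplifies to $\floor{(p+1)/2}$. This gives the one-step recurrence $\ct{p}{p^k q - 1} = \floor{(p+1)/2}\, \ct{p}{p^{k-1}q - 1}$, which iterates via the induction hypothesis applied to $\ct{p}{p^{k-1}q - 1}$ to produce the stated formula.

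There is no serious obstacle; the only mild point to check is that the boundary case $q=1$, $k=1$ produces the argument $p^{k-1}q - 2 = -1$, but this is absorbed by the convention $\ct{p}{q'} = 0$ for $q' < 0$ introduced just before Lemma \ref{lemRec1}, and in any case the coefficient in front of that term already vanishes. The whole proof is thus essentially the single observation that taking $r = p-1$ forces the ``lower'' recursive term in Lemma \ref{lemRec1} to have weight zero, which is exactly the self-similarity needed to iterate $k$ times.
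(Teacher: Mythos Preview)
Your proof is correct and follows exactly the approach indicated in the paper: induction on $k$ using Lemma~\ref{lemRec1} together with the identity $p^k q - 1 = p(p^{k-1}q-1)+(p-1)$. Your explicit observation that the second coefficient $\floor{1/2}$ vanishes when $r=p-1$ is precisely the mechanism the paper has in mind.
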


\begin{proof}
We can easily verify it by induction on $k$ with \eqref{4-A} with noting that $p^k q -1 = p(p^{k-1}q-1)+(p-1)$.
\end{proof}

We now evaluate the value of $N_p$ from this corollary.

\begin{lemma} \label{lemNp1}
Let $u$ be a positive integer and $k$ be a non-negative integer.
Then we have that 
\begin{equation} \label{4-D}
\sm{p}{p^k u} = \begin{cases}
                 2^{k\theta_2} \sm{2}{u} - \frac{3^k -1}{2} \ct{2}{u-1} & (p=2), \\
                 p^{k\theta_p} \sm{p}{u} - \frac{p^k -1}{2} \cdot \left( \frac{p+1}{2} \right)^k \ct{p}{u-1} & (p \geq 3).
                \end{cases}
\end{equation}
\end{lemma}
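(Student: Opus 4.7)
The plan is to prove Lemma \ref{lemNp1} by induction on $k$, with the induction step powered by a one-step recurrence
\begin{equation*}
\sm{p}{pv} = p^{\theta_p} \sm{p}{v} - c_p \ct{p}{v-1},
\end{equation*}
where $c_p$ is an explicit constant depending only on $p$ (namely $c_2 = 1$ and $c_p = (p-1)(p+1)/4$ for $p \geq 3$). This recurrence follows directly from Lemma \ref{lemRec1}, and Corollary \ref{corRec1} then lets me track the term $\ct{p}{p^k u - 1}$ as $k$ grows.

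First, I would obtain the one-step recurrence. Writing every $q \in \{0,1,\dots,pv-1\}$ uniquely as $q = pq' + r$ with $q' \in \{0,\dots,v-1\}$ and $r \in I_p$, and then applying \eqref{4-A}, I get
\begin{equation*}
\sm{p}{pv} = \sum_{q'=0}^{v-1} \sum_{r=0}^{p-1} \ct{p}{pq'+r} = S_1 \sm{p}{v} + S_2 \sm{p}{v-1},
\end{equation*}
where $S_1 = \sum_{r=0}^{p-1} \floor{r/2+1}$ and $S_2 = \sum_{r=0}^{p-1} \floor{(p-r)/2}$; the identity $\sum_{q'=0}^{v-1} \ct{p}{q'-1} = \sm{p}{v-1}$ uses the convention $\ct{p}{-1}=0$. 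A direct parity split gives $(S_1,S_2) = (2,1)$ for $p=2$ and $(S_1,S_2) = ((p+1)^2/4,(p^2-1)/4)$ for odd $p$, so $S_1 + S_2 = p(p+1)/2 = p^{\theta_p}$ in both cases. Substituting $\sm{p}{v-1} = \sm{p}{v} - \ct{p}{v-1}$ yields the recurrence with $c_p = S_2$.

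Next I would iterate. Applying the recurrence with $v = p^k u$ and using Corollary \ref{corRec1} to rewrite $\ct{p}{p^k u - 1} = \floor{(p+1)/2}^k \ct{p}{u-1}$, I obtain
\begin{equation*}
\sm{p}{p^{k+1}u} = p^{\theta_p} \sm{p}{p^k u} - c_p \floor{(p+1)/2}^k \ct{p}{u-1}.
\end{equation*}
For $p=2$ this is $A_{k+1} = 3 A_k - \ct{2}{u-1}$ with $A_0 = \sm{2}{u}$, which telescopes via $\sum_{j=0}^{k-1} 3^j = (3^k-1)/2$ to the stated formula. For $p \geq 3$, setting $\alpha = p^{\theta_p}$ and $\beta = (p+1)/2$, the recurrence $A_{k+1} = \alpha A_k - c_p \beta^k \ct{p}{u-1}$ has solution $A_k = \alpha^k A_0 - c_p \ct{p}{u-1} \sum_{j=0}^{k-1} \alpha^{k-1-j}\beta^j$. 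The finite geometric sum equals $(\alpha^k - \beta^k)/(\alpha - \beta)$, and the identities $\alpha - \beta = (p-1)(p+1)/2$ and $\alpha^k = p^k \beta^k$ collapse the coefficient to $\frac{p^k-1}{2}\left(\frac{p+1}{2}\right)^k$, matching the claim.

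I do not anticipate any conceptual obstacle here: the proof is essentially a careful summation followed by solving a linear first-order recurrence. The only delicate point is that the floor functions in Lemma \ref{lemRec1} behave differently in the sole even prime $p = 2$, which forces the dichotomy in the theorem statement; one must verify by hand that $c_2 = 1$ rather than plugging into the formula $(p^2-1)/4$. Beyond that, matching the algebraic form of the geometric sum to the stated closed form requires the minor rearrangement $\alpha^k - \beta^k = (p^k-1)\beta^k$, which is where the factor $(p^k-1)/2$ emerges.
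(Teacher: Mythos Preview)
Your proof is correct and follows essentially the same approach as the paper: derive the one-step recurrence $\sm{p}{pv} = p^{\theta_p}\sm{p}{v} - c_p\,\ct{p}{v-1}$ from Lemma~\ref{lemRec1}, then iterate with the aid of Corollary~\ref{corRec1}. The only cosmetic difference is that the paper first unrolls the recurrence into the sum $\sm{p}{p^k u} = p^{k\theta_p}\sm{p}{u} - c_p\sum_{l=0}^{k-1} p^{l\theta_p}\ct{p}{p^{k-l-1}u-1}$ and then applies \eqref{4-B} term by term, whereas you apply \eqref{4-B} at each step and then sum the resulting geometric series; these are algebraically equivalent.
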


\begin{proof}
By \eqref{4-A}, we can forward calculations as follows:
\begin{align*}
\sm{p}{pu} &= \sum_{q=0}^{u-1} \sum_{r=0}^{p-1} \ct{p}{pq+r} \\
           &= \sum_{q=0}^{u-1} \ct{p}{q} \sum_{r=0}^{p-1} \floor{\frac{r}{2}+1} 
            + \sum_{q=0}^{u-1} \ct{p}{q-1} \sum_{r=0}^{p-1} \floor{\frac{p-r}{2}} \\
           &= \sm{p}{u} \sum_{r=0}^{p-1} \left( \floor{\frac{r}{2}+1} + \floor{\frac{p-r}{2}} \right)
            - \ct{p}{u-1} \sum_{r=0}^{p-1} \floor{\frac{p-r}{2}}.
\end{align*}

Here we can check that $\sum_{r=0}^{p-1} (\floor{\frac{r}{2}+1} + \floor{\frac{p-r}{2}}) = p^{\theta_p}$ and
$\sum_{r=0}^{p-1} \floor{\frac{p-r}{2}} = B_p$, where
\begin{equation*}
B_p = \begin{cases}
       1 & (p=2), \\
       \frac{(p-1)(p+1)}{4} & (p \geq 3),
      \end{cases}
\end{equation*}
so that we get 
\begin{equation} \label{4-C}
\sm{p}{pu} = p^{\theta_p} \sm{p}{u} - B_p \ct{p}{u-1}.
\end{equation}

Hence we have that 
\begin{equation*}
\sm{p}{p^k u} = p^{k\theta_p} \sm{p}{u} - B_p \sum_{l=0}^{k-1} p^{l\theta_p} \ct{p}{p^{k-l-1}u-1}
\end{equation*}
by induction on $k$ with \eqref{4-C}.
Then we use \eqref{4-B} to finish the proof.
\end{proof}

Here appears a significant equation, which gives accumulation points of $\ce{p}{u}$.

\begin{lemma} \label{lemAc1}
Let $u$ be a positive integer.
Then we have that
\begin{equation} \label{4-E}
\lim_{k \to \infty} \ce{p}{p^k u} = \ce{p}{u} - \frac{\ct{p}{u-1}}{2u^{\theta_p}}.
\end{equation}
\end{lemma}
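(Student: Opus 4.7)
The plan is to derive the limit directly from the closed-form expression given in Lemma \ref{lemNp1}, by substituting into the definition $\ce{p}{p^k u} = \sm{p}{p^k u}/(p^k u)^{\theta_p}$. Since $p^{k\theta_p} = (p^k)^{\theta_p}$, the leading term $p^{k\theta_p}\sm{p}{u}$ of \eqref{4-D} contributes exactly $\sm{p}{u}/u^{\theta_p} = \ce{p}{u}$, independent of $k$. So everything reduces to analysing how fast the correction term decays.

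For the case $p \geq 3$, I would use the identity $p^{\theta_p} = p \cdot (p+1)/2$ to write $(p^k u)^{\theta_p} = p^k \cdot ((p+1)/2)^k \cdot u^{\theta_p}$. Then the factor $((p+1)/2)^k$ appearing in the correction term of \eqref{4-D} cancels cleanly, leaving
\begin{equation*}
\ce{p}{p^k u} = \ce{p}{u} - \frac{(p^k - 1)\,\ct{p}{u-1}}{2\,p^k\,u^{\theta_p}}.
\end{equation*}
Since $u$ is fixed and $(p^k - 1)/p^k \to 1$ as $k \to \infty$, passing to the limit yields \eqref{4-E}. For the case $p = 2$ the same method applies verbatim using $2^{\theta_2} = 3$: dividing the correction $-(3^k-1)\ct{2}{u-1}/2$ by $(2^k u)^{\theta_2} = 3^k u^{\theta_2}$ produces $-(3^k - 1)/(2\cdot 3^k) \cdot \ct{2}{u-1}/u^{\theta_2}$, which again converges to $\ct{2}{u-1}/(2 u^{\theta_2})$.

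There is no real obstacle here; the only thing to be careful about is that the two apparently different forms of \eqref{4-D} for $p = 2$ and $p \geq 3$ collapse to the same asymptotic behaviour, which the identity $p^{\theta_p} = p(p+1)/2$ makes transparent. Once this cancellation is noted, the limit is immediate.
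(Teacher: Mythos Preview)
Your approach is essentially identical to the paper's own proof: both divide \eqref{4-D} by $(p^k u)^{\theta_p}$, use $p^{\theta_p}=p(p+1)/2$ to simplify the correction term to $-(1-p^{-k})\ct{p}{u-1}/(2u^{\theta_p})$ (respectively $-(1-3^{-k})\ct{2}{u-1}/(2u^{\theta_2})$), and let $k\to\infty$. The only blemish is a dropped minus sign in your final clause for $p=2$: the expression $-(3^k-1)/(2\cdot 3^k)\cdot\ct{2}{u-1}/u^{\theta_2}$ converges to $-\ct{2}{u-1}/(2u^{\theta_2})$, not $+\ct{2}{u-1}/(2u^{\theta_2})$; with that corrected the argument is complete.
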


\begin{proof}
Dividing both sides of \eqref{4-D} by $(p^k u)^{\theta_p}$, 
we have the equation
\begin{equation*}
\ce{p}{p^k u} = \begin{cases}
                 \ce{2}{u} - \dfrac{1-3^{-k}}{2u^{\theta_2}} \ct{2}{u-1} & (p=2), \\
                 \ce{p}{u} - \dfrac{1-p^{-k}}{2u^{\theta_p}} \ct{p}{u-1} & (p \geq 3), 
                \end{cases}
\end{equation*}
which completes the proof.
\end{proof}

Now we complete our proof. % of that $\ce{p}{u}$ fails to converge as $u \to \infty$.
Our aim is to find two distinct accumulation points of $\ce{p}{u}$.
Substituting $u=1$ into \eqref{4-E}, we have that $\lim_{k \to \infty} \ce{p}{p^k} = 1/2$.
Next we substitute $u=2$, and then we have $\lim_{k \to \infty} \ce{p}{2p^k} = 3/2^{\theta_p +1}$.
Since $\theta_p > \theta_2$ under the condition that $p \geq 3$, we obtain that $3/2^{\theta_p +1} < 3/2^{\theta_2 +1} = 1/2$,
and the second accumulation point $3/2^{\theta_p +1}$.

We need to make a rather sophisticated argument in case $p=2$.
Substituting $u=3$ into \eqref{4-E}, we have that $\lim_{k \to \infty} \ce{2}{3 \cdot 2^k} = 3^{1-\theta_2}$.
If $3^{1-\theta_2} = 1/2$, we would find that $\theta_2 (1-\theta_2) = -1$.
This is a contradiction to the Gel'fond--Schneider theorem (see \cite[Theorem 3.1--3.2]{Par-Sha} for example), 
which assures the transcendence of $\theta_2$,
hence $3^{1-\theta_2}$ must be the second accumulation point of $\ce{2}{u}$. 
Therefore the proof of the first part of Theorem \ref{thMainB} is completed.

\begin{remark}
Numerical calculations show that $\lim_{k \to \infty} \ce{2}{3 \cdot 2^k} = 3^{1-\theta_2} = 0.525898\dots$ and 
$\lim_{k \to \infty} \ce{2}{17 \cdot 2^k}= 0.487836\dots$.
Thus we expect that $\liminf_{u \to \infty} \ce{2}{u} < 1/2 < \limsup_{u \to \infty} \ce{2}{u}$.
\end{remark}

We are going to prove the second part of Theorem \ref{thMainB}. 
We can use the same method as that for the first part.
Thus we often omit details of the proofs of the following lemmas.
In order to prove the theorem, we only need to calculate $\counting{2}{X+2Y}{q}$, $\summatory{2}{X+2Y}{u}$ and $\coefficient{2}{X+2Y}{u}$,
but we note that some of the lemmas could be easily generalized to $\counting{p}{X+pY}{q}$ and so on.

Here we would change the meaning of our symbols; 
from now on, we abbreviate $\counting{p}{X+pY}{q}$, $\summatory{p}{X+pY}{u}$ and $\coefficient{p}{X+pY}{u}$ to
$\ct{p}{q}$, $\sm{p}{u}$ and $\ce{p}{u}$, respectively.
In addition, we again define the value of $\ct{p}{q}$ at every negative integer $q$ as $\ct{p}{q} = 0$.

First, we prove the essential formula.

\begin{lemma} \label{lemRecp}
Let $q$ be a non-negative integer and $r$ belong to $I_p$.
Then we have 
\begin{equation} \label{4-a}
\ct{p}{pq+r} = \sum_{a=0}^{r} \ct{p}{q-a}.
\end{equation}
\end{lemma}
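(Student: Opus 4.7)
The plan is to establish a bijection between the pairs counted by $\phi_p(pq+r)$ and the pairs counted collectively by the right-hand side. Recall that $\phi_p(Q) = \#\setcond{n \in \Nonnegative}{(Q - pn, n) \in \Pas{p}}$, so I should study pairs $(m, n) \in \Pas{p}$ with $m + pn = pq + r$ and then decompose them according to the last base-$p$ digit of $n$.

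The decisive preliminary observation is a no-carry argument: if $m = (pq+r) - pn \geq 0$, then in the base-$p$ expansion $m = \sum_{j \geq 0} m_j p^j$, one has $m_0 = r$. Indeed, $pn$ contributes $0$ to position $0$, so in adding $m + pn$ no carry enters at position $0$, forcing $m_0 = (pq+r)_0 = r$. This is the place where the coefficient $p$ in the polynomial $X + pY$ is essential; it is also the step for which one has to be a little careful, though it is not a serious obstacle.

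Next, write $a := n_0 \in I_p$ and set $m' := (m - r)/p$, $n' := (n - a)/p$, both non-negative integers. Proposition \ref{prLucas} at position $0$ says that $(m, n) \in \Pas{p}$ forces $m_0 \geq n_0$, i.e. $r \geq a$, so $a$ ranges over $\tset{0, 1, \dots, r}$. Proposition \ref{prLucas} at positions $j \geq 1$ then translates, via $m_j = m'_{j-1}$ and $n_j = n'_{j-1}$, into the condition $(m', n') \in \Pas{p}$. Meanwhile the linear constraint $m + pn = pq + r$ becomes $m' + pn' = q - a$ after substituting $m = r + pm'$, $n = a + pn'$ and simplifying.

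The assignment $(m, n) \mapsto (a, (m', n'))$ is clearly reversible, so it is a bijection from the set counted by $\phi_p(pq+r)$ onto $\bigsqcup_{a=0}^{r} \setcond{(m', n') \in \Pas{p}}{m' + pn' = q - a}$. Summing the cardinalities, and using the convention $\phi_p(k) = 0$ for $k < 0$ to absorb the cases $a > q$, yields \eqref{4-a}. I expect no deep obstacle here: the whole argument is a clean base-$p$ digit manipulation, in fact a bit simpler than that of Lemma \ref{lemRec1}, because the factor $p$ in $X + pY$ eliminates all carry issues and cleanly separates the position-$0$ digits of $m$ and $n$ from the higher ones.
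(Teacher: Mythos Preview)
Your proof is correct and follows essentially the same approach as the paper: both decompose the count according to the least base-$p$ digit $a = n_0$, use the observation that the factor $p$ in $X+pY$ forces $m_0 = r$ with no carry, and then apply Proposition~\ref{prLucas} to reduce to the quotient problem $m'+pn' = q-a$. The only cosmetic difference is that the paper computes $v_a(q)$ for a general $q$ and substitutes $q \mapsto pq+r$ at the end, while you work directly with $pq+r$ and phrase the argument as an explicit bijection.
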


\begin{proof}
The proof will proceed similarly to that of Lemma \ref{lemRec1}.

Since 
\begin{align*}
\ct{p}{q} &= \# \setcond{n \in \Nonnegative}{(q-pn, n) \in \Pas{p}} \\
          &= \sum_{a=0}^{p-1} \left( \# \setcond{n \in \Nonnegative}{n_0 = a, (q-pn, n) \in \Pas{p}} \right), 
\end{align*}
with $q = \sum_{i=0}^{h} q_i p^i$, $n = \sum_{i=0}^{h} n_i p^i$ and $q-n = \sum_{i=0}^{h} m_i p^i$ in the base $p$,
we should calculate $v_a(q) = \# \setcond{n \in \Nonnegative}{n_0 = a , (q-pn, n) \in \Pas{p}}$ for every $a \in I_p$
to obtain the desired equation.
Noting that $m_0 = q_0$, we can find that
\begin{align*}
v_a(q) &= \# \setcond{\frac{n-a}{p} \in \Nonnegative}{\left( \frac{q-pn-m_0}{p}, \frac{n-a}{p} \right) \in \Pas{p}} \\
       &= \# \setcond{n' \in \Nonnegative}{\left( \frac{q-q_0}{p}-a-pn', n' \right) \in \Pas{p}} \\
       &= \ct{p}{\frac{q-q_0}{p}-a}
\end{align*}
in case $q_0 \geq a$, where $n' = (n-a)/p$, and otherwise $v_a(q) = 0$.
We conclude the proof by replacing $q$ with $pq+r$.
\end{proof}

\begin{corollary} \label{corRec2}
Let $q$ be a positive integer and $k$ be a non-negative integer.
In addition, we set $\gamma_{+} = (1+\sqrt{5})/2$ and $\gamma_{-} = (1-\sqrt{5})/2 = -\gamma_{+}^{-1}$.
Then,
\begin{equation} \label{4-b}
\ct{2}{2^k q-1} 
= \frac{(\gamma_{+}^{k+1} - \gamma_{-}^{k+1}) \ct{2}{q-1} + (\gamma_{+}^{k} - \gamma_{-}^{k}) \ct{2}{q-2}}{\gamma_{+} - \gamma_{-}}.
\end{equation}
\end{corollary}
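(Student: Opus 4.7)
The plan is to recognise the claimed closed form as Binet's formula applied to a Fibonacci-type recurrence, then derive that recurrence directly from Lemma \ref{lemRecp}.

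First, I would specialise Lemma \ref{lemRecp} to $p=2$, so that $r \in \{0,1\}$ yields the two identities
\begin{equation*}
\ct{2}{2q} = \ct{2}{q}, \qquad \ct{2}{2q+1} = \ct{2}{q} + \ct{2}{q-1}.
\end{equation*}
Setting $a_k := \ct{2}{2^k q - 1}$, I would then rewrite $2^k q - 1 = 2(2^{k-1}q - 1) + 1$ and apply the second identity to obtain, for $k \geq 1$,
\begin{equation*}
a_k = \ct{2}{2^{k-1}q - 1} + \ct{2}{2^{k-1}q - 2} = a_{k-1} + \ct{2}{2^{k-1}q - 2}.
\end{equation*}
For $k \geq 2$, I would further rewrite $2^{k-1}q - 2 = 2(2^{k-2}q - 1)$ and apply the first identity to get $\ct{2}{2^{k-1}q-2} = a_{k-2}$. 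This yields the clean Fibonacci recurrence $a_k = a_{k-1} + a_{k-2}$ valid for all $k \geq 2$.

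Next I would pin down the base cases. Clearly $a_0 = \ct{2}{q-1}$. For $a_1$, applying the second identity to $2q - 1 = 2(q-1) + 1$ gives $a_1 = \ct{2}{q-1} + \ct{2}{q-2}$. Writing $F_k := (\gamma_+^k - \gamma_-^k)/(\gamma_+ - \gamma_-)$, which satisfies $F_0 = 0$, $F_1 = 1$ and the Fibonacci recurrence (since $\gamma_\pm$ are the roots of $x^2 = x + 1$), I would check that
\begin{equation*}
F_{k+1}\ct{2}{q-1} + F_k \ct{2}{q-2}
\end{equation*}
agrees with $a_0$ and $a_1$ at $k=0,1$ and satisfies the same two-term recurrence in $k$. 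A short induction on $k$ then gives \eqref{4-b}.

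There is no real obstacle here; the only thing to watch is the bookkeeping around the ``$-1$'' versus ``$-2$'' offsets and the convention $\ct{p}{q}=0$ for $q<0$, which guarantees that the base cases $a_0$ and $a_1$ remain valid even when $q=1$ (so that $\ct{2}{q-2} = \ct{2}{-1} = 0$).
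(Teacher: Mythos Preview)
Your proposal is correct and follows essentially the same route as the paper: the paper's proof merely records that \eqref{4-a} yields the ``Fibonacci-like'' recurrence $\ct{2}{2^k q-1} = \ct{2}{2^{k-1}q-1} + \ct{2}{2^{k-2}q-1}$ and then appeals to easy calculations, which is exactly what you spell out in detail via the two $p=2$ specialisations of Lemma~\ref{lemRecp}, the base cases, and Binet's formula.
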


\begin{proof}
It follows from \eqref{4-a} the ``Fibonacci-like'' recurrence formula $\ct{2}{2^k q-1} = \ct{2}{2^{k-1}q-1} + \ct{2}{2^{k-2}q-1}$,
and then we can obtain the conclusion with easy calculations.
\end{proof}

%\begin{remark}
%This is the reason why we put a restriction on $p$'s; 
%We need the explicit form of $\ct{p}{p^k q - a}$ for $a = 1, 2, \dots , p-1$ to obtain the general results, 
%but it is too difficult to solve the system of $p$ linear equations when $p \geq 3$ in particular. 
%Although, in fact, it can be solved explicitly in the case when $p = 3$, the form of the solution is very complicated. 
%\end{remark}

What we have to do next is to sum up $\ct{p}{q}$'s.

\begin{lemma} \label{corNp2}
Let $u$ be a positive integer and $k$ be a non-negative integer.
Then,
\begin{equation*}
\sm{p}{p^k u} = p^{k\theta_p} \sm{p}{u} - \sum_{l=0}^{k-1} p^{l\theta_p} \sum_{b=1}^{p-1} \frac{(p-b)(p-b+1)}{2} \ct{p}{p^{k-l-1}u-b}.
\end{equation*}
In particular, we have that
\begin{equation} \label{4-d}
\sm{2}{2^k u} = 2^{k\theta_2} \sm{2}{u} - \sum_{l=0}^{k-1} 2^{l\theta_2} \ct{2}{2^{k-l-1}u-1}.
\end{equation}
\end{lemma}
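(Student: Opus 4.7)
The plan is to mimic the proof of Lemma \ref{lemNp1}: establish the case $k=1$ from the recurrence \eqref{4-a} by summing and reorganizing, then push through $k$ by induction. The base case does essentially all the combinatorial work.

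For $k=1$, I would start by expanding
\begin{equation*}
\sm{p}{pu} = \sum_{q=0}^{u-1} \sum_{r=0}^{p-1} \ct{p}{pq+r} = \sum_{q=0}^{u-1} \sum_{r=0}^{p-1} \sum_{a=0}^{r} \ct{p}{q-a}
\end{equation*}
via \eqref{4-a}, then exchange the order of the $r$ and $a$ sums: for each fixed $a \in I_p$, $r$ ranges over $\{a, a+1, \dots, p-1\}$, contributing a factor $p-a$. Using the convention $\ct{p}{q}=0$ for $q<0$, this gives
\begin{equation*}
\sm{p}{pu} = \sum_{a=0}^{p-1} (p-a) \sm{p}{u-a}.
\end{equation*}
Then rewrite $\sm{p}{u-a} = \sm{p}{u} - \sum_{j=1}^{a} \ct{p}{u-j}$ and exchange summation orders once more. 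The coefficient of $\sm{p}{u}$ is $\sum_{a=0}^{p-1}(p-a) = p(p+1)/2 = p^{\theta_p}$, while the coefficient of $\ct{p}{u-b}$ for $1 \leq b \leq p-1$ is $\sum_{a=b}^{p-1}(p-a) = (p-b)(p-b+1)/2$. This yields the $k=1$ case of the claimed identity.

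For the inductive step, apply the $k=1$ identity with $u$ replaced by $p^{k-1} u$ and plug into the induction hypothesis for $\sm{p}{p^{k-1} u}$. The factor $p^{\theta_p}$ multiplies each $p^{l \theta_p}$ coming from the hypothesis, shifting the exponent $l \mapsto l+1$; together with the boundary term $-\sum_{b} \frac{(p-b)(p-b+1)}{2} \ct{p}{p^{k-1}u - b}$ (which supplies $l=0$), this telescopes cleanly into the stated formula. The specialisation $p=2$ is then immediate: the sum over $b$ collapses to the single term $b=1$ with coefficient $(p-b)(p-b+1)/2 = 1$, giving \eqref{4-d}.

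The main obstacle is purely bookkeeping: making sure the double summation exchange $\sum_{r=0}^{p-1}\sum_{a=0}^{r} = \sum_{a=0}^{p-1}\sum_{r=a}^{p-1}$ is carried out correctly and that the coefficient $\sum_{a=0}^{p-1}(p-a)$ is identified with $p^{\theta_p}$. There is no conceptual difficulty beyond what already appears in the proof of Lemma \ref{lemNp1}; the only change is that \eqref{4-a} produces a longer tail $\sum_{a=0}^{r}$ rather than the two-term splitting used there, which is why a whole sum over $b$ rather than a single $\ct{p}{u-1}$ survives in the final expression.
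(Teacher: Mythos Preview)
Your proposal is correct and follows essentially the same approach as the paper: both derive the $k=1$ identity by summing \eqref{4-a}, swapping the $r$ and $a$ sums to produce the factor $p-a$, rewriting $\sum_{q=0}^{u-1}\ct{p}{q-a}$ as $\sm{p}{u}-\sum_{b=1}^{a}\ct{p}{u-b}$, identifying the coefficients $p^{\theta_p}$ and $(p-b)(p-b+1)/2$, and then finishing by induction on $k$. The only cosmetic difference is that you pass through the intermediate form $\sum_{a=0}^{p-1}(p-a)\sm{p}{u-a}$ explicitly, while the paper writes the same quantity directly in expanded form.
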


\begin{proof}
First, we transform the summatory function $\sm{p}{pu}$.
We can forward calculations with \eqref{4-a} as follows: 
\begin{align*}
\sm{p}{pu} &= \sum_{q=0}^{u-1} \sum_{r=0}^{p-1} \sum_{a=0}^{r} \ct{p}{q-a} \\
           &= \sum_{a=0}^{p-1} (p-a) \sum_{q=0}^{u-1} \ct{p}{q-a} \\
           &= \sum_{a=0}^{p-1} (p-a) \left( \sm{p}{u} - \sum_{b=1}^{a} \ct{p}{u-b} \right) \\
           &= p^{\theta_p} \sm{p}{u} - \sum_{b=1}^{p-1} \frac{(p-b)(p-b+1)}{2} \ct{p}{u-b}.
\end{align*}

Then we obtain the desired equations by induction on $k$.
\end{proof}

We combine \eqref{4-b} with \eqref{4-d} to obtain the following decisive formula on $\ce{2}{u}$.

\begin{lemma} \label{lemAc2}
Let $u$ be a positive integer.
Then we have that
\begin{equation} \label{4-e}
\lim_{k \to \infty} \ce{2}{2^k u} = \ce{2}{u} - \frac{3 \ct{2}{u-1} + \ct{2}{u-2}}{5u^{\theta_2}}.
\end{equation}
\end{lemma}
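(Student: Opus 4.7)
The plan is to imitate the proof of Lemma \ref{lemAc1}, but the role played there by Corollary \ref{corRec1} (where $\ct{p}{p^k q - 1}$ is a single geometric term) is now played by Corollary \ref{corRec2}, which yields a Fibonacci-type linear combination. Concretely, starting from \eqref{4-d} and dividing both sides by $(2^k u)^{\theta_2}$, and using $2^{\theta_2} = 3$, I get
\begin{equation*}
\ce{2}{u} - \ce{2}{2^k u} = \frac{1}{3^k u^{\theta_2}} \sum_{l=0}^{k-1} 3^l \ct{2}{2^{k-l-1}u - 1}.
\end{equation*}
Reindexing by $j = k - l - 1$, the right-hand side becomes $\dfrac{1}{3 u^{\theta_2}} \displaystyle\sum_{j=0}^{k-1} 3^{-j} \ct{2}{2^j u - 1}$.

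Next I substitute \eqref{4-b} for $\ct{2}{2^j u - 1}$, which expresses it as a linear combination of $\gamma_+^{j+1} - \gamma_-^{j+1}$ and $\gamma_+^j - \gamma_-^j$, divided by $\sqrt{5} = \gamma_+ - \gamma_-$, with coefficients $\ct{2}{u-1}$ and $\ct{2}{u-2}$. Since $|\gamma_\pm| \le \gamma_+ = (1+\sqrt{5})/2 < 3$, the two geometric series $\sum_{j=0}^\infty (\gamma_\pm/3)^j$ converge absolutely to $\dfrac{3}{3 - \gamma_\pm}$, so letting $k \to \infty$ produces the closed-form limit
\begin{equation*}
\lim_{k\to\infty} \sum_{j=0}^{k-1} 3^{-j} \ct{2}{2^j u - 1}
= \frac{1}{\sqrt{5}} \left[ \Bigl( \tfrac{3\gamma_+}{3-\gamma_+} - \tfrac{3\gamma_-}{3-\gamma_-} \Bigr) \ct{2}{u-1} + \Bigl( \tfrac{3}{3-\gamma_+} - \tfrac{3}{3-\gamma_-} \Bigr) \ct{2}{u-2} \right].
\end{equation*}

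Finally I simplify the two bracketed quantities using $\gamma_+ + \gamma_- = 1$, $\gamma_+ \gamma_- = -1$, $\gamma_+ - \gamma_- = \sqrt{5}$; the common denominator $(3-\gamma_+)(3-\gamma_-) = 9 - 3(\gamma_+ + \gamma_-) + \gamma_+\gamma_- = 5$ makes everything collapse, giving coefficients $\tfrac{9}{5}$ and $\tfrac{3}{5}$ on $\ct{2}{u-1}$ and $\ct{2}{u-2}$ respectively after the $1/\sqrt{5}$ factor cancels. Multiplying by the prefactor $\tfrac{1}{3u^{\theta_2}}$ then yields $\dfrac{3\ct{2}{u-1} + \ct{2}{u-2}}{5 u^{\theta_2}}$, which is exactly \eqref{4-e}.

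The main obstacle is only the bookkeeping in the algebraic simplification: one must track the two geometric series separately and verify that the denominator $(3 - \gamma_+)(3 - \gamma_-)$ evaluates to $5$, which is what produces the $5$ appearing in the denominator of \eqref{4-e}. Conceptually nothing beyond Corollary \ref{corRec2} and Lemma \ref{corNp2} is needed; the key technical input is that both characteristic roots $\gamma_\pm$ of the Fibonacci-like recurrence lie strictly inside the circle of radius $3 = 2^{\theta_2}$, which is what guarantees absolute convergence of the tail.
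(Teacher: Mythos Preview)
Your proof is correct and follows essentially the same route as the paper: combine \eqref{4-d} with \eqref{4-b}, divide by $(2^{k}u)^{\theta_2}$, and pass to the limit using $|\gamma_{\pm}| < 3 = 2^{\theta_2}$. The paper compresses the computation into a single displayed identity and then says ``we immediately obtain the desired formula,'' whereas you spell out the reindexing $j=k-l-1$, the geometric-series evaluations, and the simplification via $(3-\gamma_{+})(3-\gamma_{-})=5$; but the ingredients and the logic are identical.
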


\begin{proof}
By Cor.\ref{corRec2} and Cor.\ref{corNp2}, we have that
\begin{align*}
\ce{2}{2^k u} &= \ce{2}{u} - \frac{1}{(\gamma_{+} - \gamma_{-}) u^{\theta_2}} \times \\
&\times \sum_{l=0}^{k-1} 
\frac{(\gamma_{+}^{k-l} - \gamma_{-}^{k-l}) \ct{2}{u-1} + (\gamma_{+}^{k-l-1} - \gamma_{-}^{k-l-1}) \ct{2}{u-2}}{2^{(k-l)\theta_2}}.
\end{align*}
We immediately obtain the desired formula from this equation with noting that $2^{\theta_2} = 3 > \abs{\gamma_{\pm}}$.
\end{proof}

We now arrive at the place to complete our proof.
A good choice is to consider the case when $u=1$ and the case when $u=9$ in this situation.
Substituting each of them into \eqref{4-e}, 
we have that $\lim_{k \to \infty} \ce{2}{2^k} = 2/5$ and $\lim_{k \to \infty} \ce{2}{9 \cdot 2^k} = 64/(5 \cdot 9^{\theta_2})$.
If they were equal, we would have the equation $2{\theta}_2^2 -5 = 0$, 
which is a contradiction to the Gel'fond--Schneider theorem again.
Thus we could find two distinct accumulation points of $\ce{2}{u}$. 
Therefore we have just completed the proof of Theorem \ref{thMainB}.

\begin{remark}
Numerical calculations show that $64/(5 \cdot 9^{\theta_2}) = 0.393342\dots$, 
thus we expect that $\liminf_{u \to \infty} \ce{2}{u} < 2/5 \leq \limsup_{u \to \infty} \ce{2}{u}$.
%
%In order to obtain the second accumulation point of $\coefficient{p}{P}{u}$, we need to find a particular initial value $u$.
%However, no algorithm to find such a value is detected.
\end{remark}

\begin{remark}
We would obtain the results for general $p$ if we could find the explicit forms of $\ct{p}{p^{k}u-b}$ for $b \in I_p$ like Corollary \ref{corRec2}. 
The sequences $\tset{\ct{p}{p^{k}u-b}}_{k = 0}^{\infty}$ for $b \in I_p$ are determined by a system of $p$ linear relations.
Although, in fact, it can be solved explicitly in the case when $p = 3$, the form of the solution is very complicated. 
\end{remark}
%%%%%%%%%%%%%%%%%%%%%%%%%%%%%%%%%%%%%%%%%%%%%%%%%%%%%%%%%%%%%%%%%%%%%%%%%%%%%%%%%%%%%%%%%%%%%%%%%%%%%%%%%%%%%%%%%%%%%%%%%%%%%%%%%%%%%%%%%%
\begin{acknowledgment}
The author expresses a great gratitude to Prof.\ Kohji Matsumoto, Mr.\ Hiroki Fujino, Mr.\ Shinya Kadota and Mr.\ Yuta Suzuki 
for their helpful comments.
\end{acknowledgment}

%%%%%%%%%%%%%%%%%%%%%%%%%%%%%%%%%%%%%%%%%%%%%%%%%%%%%%%%%%%%%%%%%%%%%%%%%%%%%%%%%%%%%%%%%%%%%%%%%%%%%%%%%%%%%%%%%%%%%%%%%%%%%%%%%%%%%%%%%%

IKKAI, Tomohiro\\
Graduate School of Mathematics, Nagoya University, Furocho, Chikusa-ku, Nagoya, 464-8602, Japan\\
E-mail address: m13006z@math.nagoya-u.ac.jp

\end{document}